\newtheorem{theorem}{Theorem}
\newtheorem{corollary}[theorem]{Corollary}
\newtheorem{lemma}[theorem]{Lemma}
\newtheorem{proposition}[theorem]{Proposition}
\theoremstyle{definition}
\newtheorem{definition}[theorem]{Definition}
\newtheorem{criterion}[theorem]{Criterion}
\newtheorem{remark}[theorem]{Remark}
\DeclareMathOperator\Isom{Isom}
\DeclareMathOperator\supp{supp}
\DeclareMathOperator\out{Out}
\newcommand\kn{\protect\smash{{w_k}_{\protect\vphantom0}^{\protect\mathclap{\!\!-\!1\,\,\,}}w_n}} 
\newcommand\stackthree[3]{\begin{smallmatrix}
		\vphantom{0^x_x}\smash{\text{#1}}
	\\	\vphantom{0^x_x}\smash{\text{#2}}
	\\	\vphantom{0^x_x}\smash{\text{#3}}
	\end{smallmatrix}}
\xapptocmd{\@sect}{\csname #1mark\endcsname{#7}}{}{}
\renewcommand{\sectionmark}[1]{\markright{\thesection.\ #1}{}}
\newcommand\arxiv[1]{\href{https://arxiv.org/abs/#1}{arXiv:#1}}
\renewcommand\MR[1]{~\href{http://www.ams.org/mathscinet-getitem?mr=#1}{MR#1}}
\title[Exponential Decay in Weakly Hyperbolic Groups]{Linear Progress with Exponential Decay in Weakly Hyperbolic Groups}
\author{Matt Sunderland}
\date{2017-Oct-14}
\address{365 Fifth Avenue, New York, NY 10016}
\subjclass[2010]{60G50; 20F67}
\begin{document}

\begin{abstract}
A random walk $w_n$ 
on a separable, geodesic hyperbolic metric space $X$
converges to the boundary $\partial X$ with probability one
when the step distribution supports two independent loxodromics.
In particular, the random walk makes positive linear progress.
Progress is known to be linear with exponential decay when
(1) the step distribution has exponential tail and
(2) the action on $X$ is acylindrical.

We extend exponential decay to the non-acylindrical case.
\end{abstract}

\maketitle

\section{Introduction}\label{intro}
Suppose $X_n$ is a sequence of independent, 
identically distributed random variables,
each taking the values 1 and $-1$ 
with probabilities $p$ and $1 - p,$ respectively,
for some fixed $p \in [\frac12, 1].$
Since the $X_n$ have finite expectation $2p - 1,$
by the law of large numbers,
\(	\textstyle\frac1n (X_1 + \cdots + X_n) 
	\to 2p - 1
	\)
almost surely.
Setting $Z_n := X_1 + \cdots + X_n,$
one obtains the stochastic process 
that Woess
calls P\'olya's Walk (in one dimension)
\cite{woess00}*{\S{}I.1.A}.
In this process,
our walker starts at zero
and then wanders randomly on the real line,
each time taking either a unit-one step to the left with probability $p,$
or a unit-one step to the right with probability~$1 - p.$

P\'olya's Walk with $p > \frac12$
makes \emph{positive linear progress}, meaning that  
\[	\liminf_{n \to \infty}
	\textstyle \frac1n |Z_n| 
	> 0
	\text{ almost surely.}
	\]
Indeed,
\(	\frac1n |Z_n|
	\ge \frac1n Z_n
	\to 2p - 1 
	> 0
	\)
almost surely.
P\'olya's Walk with $p = \smash{\frac12},$
however,
has expected number of returns 
$\sum_n \mathbb  P(Z_n = 0) = \infty,$ since
\(	\mathbb P(Z_{2n} = 0) 
	= 2^{-2n} \binom{2n}n 
	\sim cn^{-0.5}
	\)
for some constant $c.$
Note that in P\'olya's Walk (with any $p$),
the walker can only return to zero at even times.
It follows that the almost sure number of returns is infinite as well,
and so P\'olya's Walk with $p = \smash{\frac12}$
does not make positive linear progress.
See, for example, 
Woess \cite{woess00}*{\S{}I.1.A}
or Lawler~\cite{lawler10}*{Theorem 1.1}.

We say that a real-valued stochastic process $Z_n$ 
makes \emph{linear progress with exponential decay} (Definition \ref{drift})
if we can find a sufficiently large constant $C > 0$ 
so that for all~$n,$
\begin{equation}
	\mathbb P(Z_n \le n/C) 
	\le Ce^{-n/C}
	.
	\label{intro:drift}
	\end{equation}
Note that if $Z_n$ satisfies~\eqref{intro:drift}, 
then so does $|Z_n|.$
P\'olya's Walk with $p > \frac12$ 
can be shown to make
linear progress with exponential decay
using standard results in concentration of measure;
see for example, 
\cite{maher12}*{Proposition A.1}.
Exponential decay can also be shown using the following argument,
which will generalize to the setting of Theorem \ref{thm}.

If a (not necessarily Markov) stochastic 
process $Z_n$
has constants $t, \epsilon > 0$ so that for all $n,$
\begin{equation}
	\mathbb E\big(e^{-t(Z_{n + 1} - Z_n)} \mid Z_n\big)
	\le 1 - \epsilon
	,
	\label{intro:prog}
	\end{equation}
then $Z_n$ makes linear progress with exponential decay
(Proposition \ref{prog:lem}).
We say a process with such $t, \epsilon$
makes \emph{uniformly positive progress}
(Criterion \ref{prog:def}).
For any random variable $Z,$
the moment generating function $f(t) = \mathbb E(e^{tZ})$
has the property that $f'(0) = \mathbb E(Z).$
Thus~\eqref{intro:prog} 
captures a notion of positive progress
independent of location.

Therefore, it remains only to show
that P\'olya's Walk with $p > \frac12$
makes uniformly positive progress.
Let $p \in (\frac12 ,1],$
and let $X_n$ be the sequence of independent, 
identically distributed random variables
taking the values 1 and $-1$ 
with probabilities $p$ and $1 - p,$ respectively.
The claim is that $Z_n := X_1 + \cdots X_n$
satisfies~\eqref{intro:prog}
for some positive $t, \epsilon.$
Since $X_{n+1}, Z_n$ are independent
and $Z_{n+1} - Z_n = X_{n+1},$
\[	\mathbb E\big(e^{-t(Z_{n+1} - Z_n)} \mid Z_n\big) 
	= \mathbb E\big(e^{-tX_{n+1}}\big)
	= pe^{-t} + (1-p)e^t
	\]
for all $n$ and $t.$
The right-hand side $f(t) := pe^{-t} + (1-p)e^t$
has value $f(0) = 1$
and derivative $f'(0) = 1 - 2p.$
Moreover, this derivative is negtative, since $p > \frac12.$
Hence $f(t) = 1 - \epsilon$ for some $t, \epsilon > 0,$ as desired.

\subsubsection*{Weakly hyperbolic groups}
Thinking of the steps $X_n$ in P\'olya's walk 
as elements of $\mathbb Z$ acting on the real line 
inspires a notion of random walks on non-abelian groups.
Let $G$ be a group acting by isometries on a metric space $(X, d_X),$
and let $\mu$ be a probability distribution on $G.$
Following the notation from Tiozzo \cite{tiozzo15}*{\S1},
we define the $(G, \mu)$-random walk $w_n$ on $X$
by independently drawing at each time $n$
an element $s_n$  from $G$ with distribution $\mu,$
and then defining the random variable $w_n := s_1 \cdots s_n.$
Fixing a basepoint $x_0$ in $X,$
we obtain $(w_nx_0)_{n \in \mathbb N},$
a stochastic process taking values in $X.$

The notions of positive linear progress 
and linear progress with exponential decay 
also generalize to non-abelian groups by putting $Z_n := d_X(x_0, w_nx_0).$
We say that the $(G, \mu)$-random walk on $X$
is \emph{weakly hyperbolic} 
if $X$ is separable, geodesic, and $\delta$-hyperbolic.
Note that $X$ need not be locally compact.
We say that the random walk is \emph{non-elementary}
if the support of $\mu$ generates a subgroup
containing two loxodromics with disjoint endpoints on $\partial X$
(Definition \ref{nel:def}).
Such loxodromics are sometimes called independent
\citelist{\cite{osin16}*{Theorem 1.1} \cite{maher-tiozzo}*{\S1}}.
Notably, the definition of non-elementary excludes P\'olya's Walk.

As shown by Maher and Tiozzo,
every non-elementary, weakly hyperbolic random walk 
makes positive linear progress.
Progress is linear with exponential decay, moreover,
when the support of $\mu$ 
is bounded in $X$~\cite{maher-tiozzo}*{Theorem 1.2}.

We say that $\mu$ has \emph{exponential tail} 
if there exists $\lambda > 0$ such that
\[	\sum\nolimits_{g \in G} 
	e^{\lambda d_X(x_0, gx_0)}
	\mu(g) 
	< \infty
	.
	\]
Mathieu and Sisto 
prove linear progress with exponential decay 
in the case of geodesic, hyperbolic $X$
and $\mu$ with exponential tail 
and with support generating a subgroup not virtually cyclic, 
acting acylindrically with unbounded orbits in $X$
\cite{mathieu-sisto}*{Theorem 9.1}.
If a group $G$ is not virtually cyclic and
acts acylindrically with unbounded orbits on a hyperbolic space,
then $G$ contains infinitely many loxodromics 
with disjoint endpoints
\cite{osin16}*{Theorem 1.1}.
In particular, $G$ is non-el\-e\-m\-ent\-ary (Definition~\ref{nel:def}).

The goal of this paper is to show the following.
\begin{theorem}\label{thm}
	Every non-elementary, weakly hyperbolic random walk
	with exponential tail
	makes linear progress with exponential decay.
	\end{theorem}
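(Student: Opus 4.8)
The plan is to verify the hypothesis of Criterion~\ref{prog:def}---uniformly positive progress---not for the displacement process directly, but for a time-coarsened version of it, and then to invoke Proposition~\ref{prog:lem}.

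First I would fix a large integer $L$ (to be chosen at the end) and study the process $\tilde Z_k := d_X(x_0, w_{kL}x_0)$, whose increments are controlled by $g := w_{kL}^{-1}w_{(k+1)L} = s_{kL+1}\cdots s_{(k+1)L}$, which is independent of $\mathcal F_{kL}$ and distributed as $\mu^{*L}$. Writing $w_{(k+1)L} = w_{kL}g$, applying the isometry $w_{kL}^{-1}$, and using both the Gromov product identity $d_X(u,v) = d_X(x_0,u) + d_X(x_0,v) - 2(u\cdot v)_{x_0}$ and the isometry invariance $d_X(x_0, w_{kL}^{-1}x_0) = d_X(x_0, w_{kL}x_0)$, one gets
\[ \tilde Z_{k+1} - \tilde Z_k = d_X(x_0, gx_0) - 2\,(gx_0 \cdot w_{kL}^{-1}x_0)_{x_0}. \]
Consequently $\mathbb E\big(e^{-t(\tilde Z_{k+1} - \tilde Z_k)} \mid \mathcal F_{kL}\big) = F_L\big(w_{kL}^{-1}x_0\big)$, where $F_L(y) := \mathbb E_{g\sim\mu^{*L}}\big(e^{-t\,d_X(x_0,gx_0)}\,e^{2t(gx_0\cdot y)_{x_0}}\big)$. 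So it suffices to produce $L, t, \epsilon > 0$ with $F_L(y) \le 1-\epsilon$ for every $y$ in the orbit $Gx_0$: by the tower property this gives the bound of Criterion~\ref{prog:def} for $\tilde Z_k$, Proposition~\ref{prog:lem} upgrades it to linear progress with exponential decay in $k$, and a short estimate on the at most $L$ intermediate steps---each with exponential tail, hence with exponentially small sum---transfers the conclusion back to $d_X(x_0, w_nx_0)$.

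To bound $F_L$, abbreviate $D := d_X(x_0, gx_0)$ and $B := (gx_0 \cdot y)_{x_0}$, and note that $0 \le B \le D$. On the event $E := \{D \ge L\ell/2\}\cap\{B \le L\ell/8\}$, where $\ell > 0$ is the linear rate of progress, we have $D - 2B \ge L\ell/4$, so $e^{-t(D-2B)} \le e^{-tL\ell/4}$ there. On $E^c$ we use $e^{-t(D-2B)} \le e^{tD}$ (valid since $B \le D$) together with Cauchy--Schwarz and the bound $\mathbb E_{g\sim\mu^{*L}}(e^{2tD}) \le \big(\mathbb E_{s\sim\mu}e^{2t\,d_X(x_0,sx_0)}\big)^L =: M(2t)^L$, which is finite for $2t < \lambda$ by the exponential tail. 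This yields $F_L(y) \le e^{-tL\ell/4} + M(2t)^{L/2}\,\mathbb P(E^c)^{1/2}$, with $\mathbb P(E^c) \le \mathbb P(D < L\ell/2) + \sup_y\mathbb P(B > L\ell/8)$. The term $\mathbb P(D < L\ell/2)$ tends to $0$ as $L\to\infty$ by positive linear progress (Maher--Tiozzo). Fixing a constant $c > 0$ and then taking $t := c/(L\ell)$, the exponential tail keeps $M(2t)^{L/2}$ bounded as $L \to \infty$, while $e^{-tL\ell/4} = e^{-c/4} < 1$; hence, once $\sup_y\mathbb P(B > L\ell/8)$ is also shown to vanish, choosing $L$ large makes $\sup_y F_L(y) \le 1 - \epsilon$.

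The hard part is exactly the remaining input: that $\sup_{y\in Gx_0}\mathbb P_{g\sim\mu^{*L}}\big((gx_0\cdot y)_{x_0} > L\ell/8\big) \to 0$ as $L\to\infty$. Since $gx_0 = w_Lx_0$ stays close to the geodesic $[x_0, gx_0]$, a large value of $(gx_0\cdot y)_{x_0}$ forces the walk to fellow-travel $[x_0, y]$ for a definite fraction of its length; equivalently, it asks that the harmonic measure of a shrinking shadow be small, uniformly in the shadow's center $y$. This is precisely what Maher--Tiozzo's shadow estimates are designed to give in the absence of acylindricity, and non-elementarity---two independent loxodromics, which let the walk escape any prescribed direction with probability bounded below---is what makes it true. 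I expect establishing (or adapting) this uniform shadow decay to occupy the bulk of the argument, with the extra care coming from the fact that $X$ is only separable, so the parameter $y$ ranges over a non-compact set rather than a compact one.
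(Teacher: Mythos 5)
Your proposal is correct in outline, and it reaches the theorem by a genuinely different route through the middle of the argument. The skeleton is the same as the paper's: coarsen time into blocks, verify uniformly positive progress (Criterion \ref{prog:def}) for the block walk, upgrade via Proposition \ref{prog:lem}, and absorb the at most $L$ remainder steps using exponential tail as in Corollary \ref{iter:cor}. Your increment identity $\tilde Z_{k+1}-\tilde Z_k = |g| - 2(g,w_{kL}^{-1})_1$ is exactly the horofunction $\rho_{w_{kL}^{-1}}(g)$ of the paper, so your bound $\sup_y F_L(y)\le 1-\epsilon$ is the paper's ``uniformly positive horofunctions'' (Criterion \ref{horo:def}). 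Where you diverge is in how that bound is obtained. The paper fixes a constant shadow depth $d$ and waiting time $k$, lets the block length tend to infinity, and must then (i) strengthen shadow decay to the conditional statement $\sup_{g,h}\mathbb P(w_kh\in S(g,d))<\epsilon$ --- Lemma \ref{back:lem}, the only place hyperbolicity is used directly, via the four-point condition --- and (ii) run the careful conditional Cauchy--Schwarz of Lemma \ref{horo:lem} to decouple the shadow event from the size of the increment. You instead let the depth grow linearly with the block ($d=L\ell/8$) and take $t\sim c/(L\ell)$; then the correlation on the bad event is killed by a single \emph{unconditional} Cauchy--Schwarz against $\mathbb E(e^{2t|w_L|})\le M(2t)^{L}$, which stays bounded precisely because of the $t\sim 1/L$ scaling and the exponential tail. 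This bypasses Lemma \ref{back:lem} entirely and confines all use of hyperbolicity to the cited convergence results; the price is that you need a positive almost-sure rate $\ell$ (available here, and in any case implied by Theorem \ref{nel:thm}), whereas the paper's Lemma \ref{horo:lem} gets by with the weaker input $\mathbb E|w_n|\to\infty$.

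One correction to your own assessment: the input you flag as ``the hard part'' --- that $\sup_{y\in Gx_0}\mathbb P_{g\sim\mu^{*L}}\bigl((g,y)_1> L\ell/8\bigr)\to 0$ --- does not require any new work or adaptation. It is an instance of the shadow decay already quoted in the paper (Definition \ref{s}, Theorem \ref{nel:thm}, i.e.\ Maher--Tiozzo Corollary 5.3): there is a function $f$ with $f(d)\to 0$ and $\mathbb P((w_n,g)_1\ge d)\le f(d)$ uniformly in $n$ and in the shadow center $g$, so taking $n=L$ and $d=L\ell/8\to\infty$ gives exactly your estimate, uniformly in $y$; separability and non-compactness of the orbit play no role because the uniformity over centers is built into that statement. (The paper's stronger ``uniform shadow decay'' with the extra translate $w_kh$ is needed only for its own conditioning scheme, not for yours.) With that citation made precise, and the routine expansion $M(2t)\le 1+O(t)$ justifying the boundedness of $M(2t)^{L/2}$, your argument is complete.
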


The result applies, for example, to the action of $\out(F_n)$
on the complex of free splittings $\mathcal{FS}(F_n).$
The action is shown to be non-elementary, weakly hyperbolic,
and non-acylindrical by Handel and Mosher
\cite{handel-mosher16}*{Theorem 1.4}.
Even when a group is acylindrically hyperbolic,
we may care about a non-acylindrical action 
on another hyperbolic space
because of the geometric insight 
garnered from the particular action and its loxodromics.

We may care about the rate of convergence,
instead of just whether something tends to zero, 
depending on the technical details of the random methods at hand.
For ex\-ample, Lubotzky, Maher, and Wu
use exponential decay in an essential way 
in their study of the Casson invariant 
of random Heegaard splittings~\cite{lubotzky-maher-wu}.

The idea of the proof is essentially the same as 
in P\'olya's Walk with $p > \frac12$:
prove that every non-elementary, weakly hyperbolic 
random walk with exponential tail
makes uniformly positive progress (Criterion \ref{prog:def}),
which we know implies linear progress with exponential decay
(Proposition \ref{prog:lem}).

\begin{proof}[Proof of Theorem \ref{thm}]
Working backwards, 
the goal is to find $C > 0$ so that for all~$n,$
\begin{equation}
	\mathbb P(d_X(x_0, w_nx_0) \le n/C) 
	\le Ce^{-n/C}
	.
	\label{thm:claim}
	\end{equation}
This proof is structured so that
each equation~\eqref{thm:claim}--\eqref{thm:drift}
is implied by the next.

We show that~\eqref{thm:claim}
follows from 
$a$-iterated linear progress with exponential decay,
i.e., the existence of $C > 0$ and integer $a > 0$ so that for all $n,$
\begin{equation}
	\mathbb P(d_X(x_0, w_{an}x_0) \le n/C) 
	\le Ce^{-n/C}
	,
	\label{thm:iter}
	\end{equation}
and exponential tail
in Corollary \ref{iter:cor}.

The proof of this corollary is based on the following intuition.
If the random walk satisfies~\eqref{thm:claim} 
at all times $n = ai$ ($i \in \mathbb Z$) 
and the random walk cannot wander too far
during the intervening times (by exponential tail),
then the random walk satisfies~\eqref{thm:claim} at all times~$n.$
The corollary is a special case
of the general fact 
that if $Y_n$ has exponential tail
and $Z_n$ makes linear progress with exponential decay,
then their sum also makes linear progress with exponential decay.

We show that linear progress with exponential decay
follows from uniformly positive progress
in Proposition \ref{prog:lem}.
Hence, to show~\eqref{thm:iter},
it suffices to find constants $b, \epsilon > 0$
such that for all $n,$
\begin{equation}
	\mathbb E\big(
		e^{-b(d_X(x_0, w_{an+a}x_0) - d_X(x_0, w_{an}x_0)} 
		\mid
		d_X(x_0, w_{an}x_0)\big) 
	\le 1 - \epsilon
	.
	\label{thm:prog}
	\end{equation}
Crucially,~\eqref{thm:prog} implies~\eqref{thm:iter} even though
the process $Z_n = d_X(x_0, w_{an}x_0)$ is not necessarily Markov.
The proof of the proposition is purely probabilistic.

Given $w, g$ in $G,$ define the horofunction
\(	\rho_g(w)
	= d_X(gx_0, wx_0)
	- d_X(x_0, gx_0) 
	.
	\)
We prove that~\eqref{thm:prog} holds 
as long as there are constants $a, b > 0$ so that 
\begin{equation}
	\sup\nolimits_{g \in G}
	\big(\mathbb E\big(e^{-b\rho_g(w_a)}\big)\big) 
	< 1
	\label{thm:horo}
	\end{equation}
in Lemma \ref{prog:cor}.
We say a random walk with such $a, b$
has \emph{uniformly positive horofunctions}
at time $a.$

Given $w, g$ in $G,$
define the Gromov product to be 
$(w, g)_1
	= \frac12 d_X(x_0, wx_0) 
	+ \frac12 d_X(x_0, gx_0) 
	- \frac12 d_X(wx_0, gx_0).$
In the hyperbolic setting,
one can think of $(w, g)_1$
as measuring the distance $w$ and $g$
(or rather a geodesic from $x_0$ to $wx_0$ 
and a geodesic from $x_0$ to $gx_0$)
fellow-travel.
We establish that~\eqref{thm:horo}
holds as long as there exist sufficiently large $d, k$ so that
\begin{equation}
	\sup\nolimits_{g, h \in G} 
	\mathbb P((w_kh, g)_1 \ge d) 
	\le 0.01
	\label{thm:back}
	\end{equation}
in Lemma \ref{horo:lem}.
We say a random walk with such $d, k$
has \emph{uniform shadow decay}.

The proofs of Lemmas \ref{prog:cor} and \ref{horo:lem}
are adapted from Mathieu and Sisto \cite{mathieu-sisto}*{Theorem 9.1}.
The key to the proof of Lemma \ref{prog:cor}
is the observation that $\mathbb E(\rho_{{w_{ai}}^{-1}}(w_a))$
equals 
\(	\mathbb E(
		d_X(w_{ai}x_0, w_{ai + a}x_0)
		- d_X(x_0, w_{ai + a}x_0)
		)
	\)
and hence
the horofunction $\rho_g(w_a)$ 
measures a notion of progress;
see Figures~\ref{prog:fig} and~\ref{prog:g}.
In the proof of Lemma \ref{horo:lem},
we essentially argue that
if you are unlikely to be in the shadow of $g,$
then you are unlikely to be in the horoball about~$g$
(regardless of hyperbolicity).

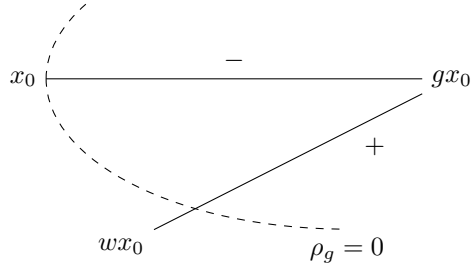
\begin{figure}
	\begin{tikzpicture}
		\draw[dashed, yscale = 0.5]
			(0, 0) arc (180: 270: 4) node[below] {$\rho_g = 0$}
			(0, 0) arc (180: 150: 4)
			;
		\draw
			(5, 0) node[right] (g) {$gx_0$}
			(0, 0) node[left] (x) {$x_0$}
			(1, -2) node[below] (w) {$wx_0$}
			(w) -- 
			node[below right, near end]
			{$+$} 
			(g)
			(x) -- 
			node[above]
			{$-$} 
			(g)
			;
		\end{tikzpicture}
	\caption{Stepping from $x_0$ to $wx_0,$
		the horofunction
		$\rho_g(w) = d_X(gx_0, wx_0) - d_X(x_0, gx_0)$
		measures progress away from the basepoint $gx_0$ 
		(regardless of hyperbolicity).
		In the proof of Theorem~\ref{thm},
		working backwards we show that \eqref{thm:iter}
		linear progress for the $a$-iterated random walk
		follows from \eqref{thm:prog} uniformly positive progress,
		which follows from \eqref{thm:horo} uniformly positive horofunctions.}
	\label{prog:fig}
	\end{figure}

We use the hyperbolicity of $X$
and the positive linear progress of $d_X(x_0, w_nx_0)$
to show that uniform shadow decay~\eqref{thm:back} 
follows from \emph{shadow decay}, i.e.,
\begin{equation}
	\adjustlimits \lim_{d \to \infty} \sup_{n, g} 
	\, \mathbb P((w_n, g)_1 \ge d) =
	\adjustlimits \lim_{d \to \infty} \sup_{n, g} 
	\, \mathbb P((w_n^{-1}, g)_1 \ge d) = 0
	,
	\label{thm:drift}
	\end{equation}
in Lemma \ref{back:lem}.
The proof uses only the Gromov four-point condition
and does not require the action of $G$ on $X$ to be acylindrical.
See Figure~\ref{back:claim}.

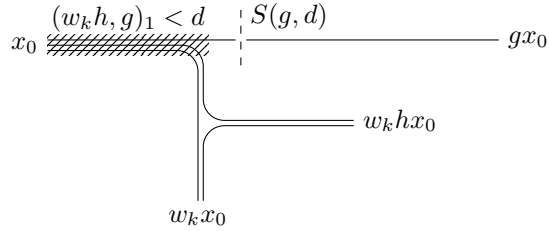
\begin{figure}
	\begin{tikzpicture}[rounded corners = 8]
		\fill[pattern = north east lines, rounded corners = 0] 
		  (0, -6pt) 
		  rectangle 
		  node[above, outer sep = 4pt] 
		  {$\smash{(w_kh, g)_1} < d$} 
		  (2cm + 4pt, 2pt) 
		  ;
		\draw (0, 0) -- (2.5, 0) ++(right: 4pt) -- (6, 0) node[right] {$gx_0$};
		\draw[yshift = -4] (0, 0) -- (2, 0) -- (2, -2) 
		  node[below] {$w_kx_0$};
		\draw[yshift = -2] 
		  (0, 0) node[left] {$x_0$} -- 
		  (2cm + 2pt, 0) -- ++(down: 1) -- 
		  ++(right: 2) node[right] {$w_khx_0$}
		  ++(down: 2pt) -- ++(left: 2) -- ++(down: 1)
		  ;
	\draw[dashed] 
		(2.5cm + 2pt, 0)
		++(up: 0.4) 
		-- node[right, pos = 0.10] {$S(g, d)$} 
		++(down: 0.8)
		;
		\end{tikzpicture}
	\caption{Uniform shadow decay \eqref{thm:back} means
		it is unlikely that $\smash{(w_kh, g)_1} \ge d$ for large $d, k$.
		Shadow decay \eqref{thm:drift}
		means it is unlikely that 
		$\smash{(w_n, g)_1} \ge d$ for large $d.$
		We derive \eqref{thm:back} from \eqref{thm:drift}
		using positive linear progress and hyperbolicity.
		}
	\label{back:claim}
	\end{figure}

Finally, both shadow decay~\eqref{thm:drift} and positive linear progress
are consequences of the convergence
of non-elementary, weakly hyperbolic random walks
(Theorem \ref{nel:thm} \cite{maher-tiozzo}*{1.1, 1.2, 5.3}).
\end{proof}

\section{Background}
\subsection{Metric hyperbolicity}\label{metric}
Given a metric space $(X, d_X)$ and three points $a, b, c$ in $X,$
the Gromov product of $b$ and $c$ with respect to $a$ is defined to be
\[	\textstyle
	(b, c)_a
	:= \frac12 d_X(a, b)
	+ \frac12 d_X(a, c)
	- \frac12 d_X(b, c)
	.
	\]
The metric space is said to be $\delta$-hyperbolic,
where $\delta$ is some fixed real number,
if every quadruple of points $a, b, c, d$ in $X$ 
satisfies the Gromov four-point condition 
\begin{equation}
	(b, c)_a \ge 
	\min \{(b, d)_a, (c, d)_a\} - \delta
	.
	\label{4ptX}
	\end{equation}
This $\delta$ cannot depend on the four points: 
the same fixed constant $\delta$ must work for all quadruples in $X.$
This condition can be shown to be equivalent to others, 
such as the slim triangles condition
\cite{bridson-haefliger}*{Propositions III.H\,1.17 and 1.22}.
When $\delta$ is obvious or not important,
we may say that $X$ is a hyperbolic metric space,
or colloquially, that $X$ has slim triangles.

Each hyperbolic space $X$ 
has an associated Gromov boundary $\partial X.$ 
If $X$ is locally compact, 
then both $\partial X$ and $\overline X = X \cup \partial X$ are compact. 
For a construction of the Gromov boundary, see for example 
Bridson and Haefliger~\cite{bridson-haefliger}*{\S III.H.3}.

In this paper we exclusively take the Gromov product 
of points in the orbit of a fixed basepoint $x_0$ in $X$ 
under a countable group of isometries $G \to \Isom X.$
Therefore, to simplify the notation, for each $w$ in $G,$ we denote
\[	|w| 
	:= d_X(x_0, wx_0)
	\]
(following Kaimanovich \cite{kaimanovich00}*{\S7.2}),
and given two more elements $g, h$ in $G$
we denote the Gromov product 
of $wx_0$ and $gx_0$ with respect to $hx_0,$
\[	\textstyle
	(w, g)_h 
	:= \frac12 d_X(wx_0, hx_0)
	+ \frac12 d_X(gx_0, hx_0)
	- \frac12 d_X(wx_0, gx_0)
	\]
(following Mathieu and Sisto \cite{mathieu-sisto}*{\S9,\,p.\,29}).

In the slim triangles setting,
the intuition is that 
the Gromov product measures distance fellow-travelled,
in the sense of Figure \ref{prod:fig}.
We sometimes say that $w$ and $g$ fellow-travel to mean that
all geodesics $[x_0, wx_0]$ and $[x_0, gx_0]$ in $X$ fellow-travel.
We use the notion of fellow-travelling for intuition only;
for proofs we appeal to the Gromov four-point condition directly.
\begin{figure}
	\begin{tikzpicture}[rounded corners = 8]
		\fill[pattern = north east lines, rounded corners = 0] 
		  (0, -4pt) rectangle 
		  node[above, outer sep = 4pt] {$\smash{(w, g)_1} < d$} 
		  (2cm + 2pt, 2pt) 
		  ;
		\draw (0, 0) node[left] {$x_0$}
		  -- (2.5, 0) ++(right: 4pt) -- (6, 0) node[right] {$gx_0$};
		\draw[yshift = -2] (0, 0) -- (2, 0) -- (2, -2) 
		  node[below] {\phantom{not in $S(g, d)$}$wx_0$ not in $S(g, d)$};
	\draw[dashed] 
		(2.5cm + 2pt, 0) ++(up: 0.7) 
		-- node[right, pos = 0.15] {$S(g, d)$} 
		++(down: 1.4)
		;
		\end{tikzpicture}
	\caption{When triangles are slim, the Gromov product 
		$(w, g)_1$ measures the distance that $w, g$ fellow-travel. 
		The shadow $S(g, d)$ contains $w$ 
		if and only if that distance $(w, g)_1 \ge d.$}
	\label{prod:fig}
	\end{figure}
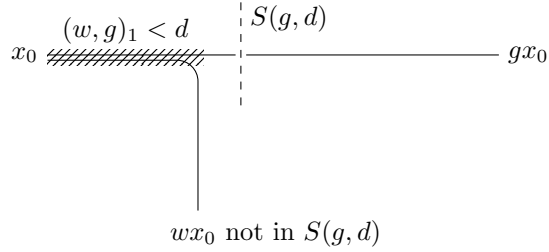


As in Figure \ref{4pt:fig}, the Gromov four-point condition~\eqref{4ptX} applied
to the points $x_0,$ $wx_0,$ $gx_0,$ $hx_0,$ implies that
\begin{equation}
	\begin{aligned}
			\text{either }
			(h, w)_1 &\le (w, g)_1 + \delta
		\\	\text{ or } 
			(h, g)_1 &\le (w, g)_1 + \delta
			.
		\end{aligned}
	\label{4ptG}
	\end{equation}
\begin{figure}
	\begin{tikzpicture}[rounded corners = 8]
		\fill[pattern = north east lines, rounded corners = 0] 
			(0, -8pt) 
			rectangle 
			node[above, outer sep = 4pt] 
			{$\smash{(w, g)_1} + \delta$} 
			(2cm + 4pt ,2pt) 
			;
		\draw (0, 0) -- (2, 0) -- (6, 0) node[right] {$gx_0$};
		\draw[yshift = -6] (0, 0) -- (2, 0) -- (2, -2) 
			node[below] {$wx_0$}
			;
		\node (wh) at (5, -1cm - 4pt) {$hx_0$};
		\draw[yshift = -4] (0, 0) 
			node[left] {$x_0$} -- (2cm + 2pt, 0) -- 
			+(down: 1) -- (wh)
			;
		\draw[yshift = -2] (wh) -- 
			+(up: 1cm + 2pt) -- (0, 0)
			;
		\node[yshift = -3] at (3.5, -0.5) {not both};
		\end{tikzpicture}
	\caption{The Gromov four point condition \eqref{4ptG}
		means that $h$ cannot fellow-travel both $w$ and $g$ 
		beyond distance $(w, g)_1 + \delta.$}
	\label{4pt:fig}
	\end{figure}
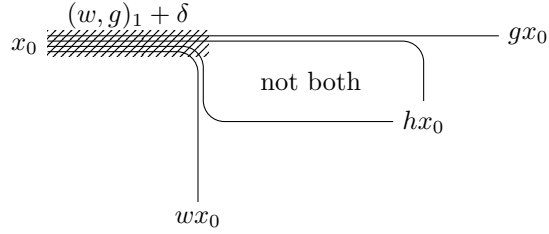

We will also use the following version of the triangle inequality 
in terms of the Gromov product:
\begin{equation}
	0 \le (w, g)_h \le 
	\min\{d_X(wx_0, hx_0), d_X(gx_0, hx_0)\}
	.
	\label{triangle}
	\end{equation}
This equation is equivalent to the triangle inequality,
and always holds regardless of whether or not $X$ is hyperbolic.

\subsubsection*{Loxodromics}
A map between metric spaces 
$f:(X,d_X)\to(Y,d_Y)$ 
is a $C$-{quasi-isometric embedding} if
\[	d_X(x, x')/C - C 
	\le d_Y(f(x), f(x')) 
	\le d_X(x, x')C + C
	\]
for all $x,$ $x'$ in $X,$ where $C$ 
is independent of the choice of $x,$ $x'.$
When the domain is $X = \mathbb Z,$
we call such a map a quasi-geodesic.
An isometry $g$ of $X$ is called a loxodromic
if for some (equivalently, all) $x_0$ in $X,$ 
the map $n \mapsto g^nx_0$ is a quasi-geodesic.

A loxodromic isometry of a hyperbolic space 
$g: \overline X \to \overline X$
fixes exactly two points, which are on the boundary.
These fixed points are $\lim_{n \to \infty} g^n x_0$
and $\lim_{n \to -\infty} g^n x_0$ regardless of $x_0,$
and are sometimes called the endpoints of the loxodromic.
See, for example,
Kapovich and Benakli~\cite{kapovich-benakli}*{Theorem 4.1}.

\subsubsection*{Acylindricality}
We do not use acylindricality in any of our proofs,
but the definition is included for completeness.
Suppose $G$ acts on $(X, d_X).$
The stabilizer (or isotropy group) of a point $x$ in $X$
is defined to be $G_x := \{g \in G : gx = x\}.$
The $r$-coarse stabilizer of $x$ is
\[	G_{x, r} 
	:= \{g \in G : d_X(x, gx) \le r\}
	.
	\]
We say that the action of $G$ is acylindrical
if every $r > 0$ has constants $d(r),$ $n(r)$
such that the $r$-coarse stabilizers 
of any two points of sufficient distance $d_X(x, y) \ge d$ 
have not too many group elements in common
\[	\#\, (G_{x, r} 
	\cap G_{y, r})
	\le n
	.
	\]

\subsubsection*{Classification of actions on hyperbolic spaces}\label{classify}
Consider a (possibly not proper) action of $G$ 
on the $\delta$-hyperbolic space $X$ by isometries.
As shown by Gromov \cite{gromov87}*{\S8.1--8.2},
the action of $G$ is either
\begin{enumerate}
	\item elliptic ($G$ has bounded orbits in $X$),
	\item parabolic ($G$ has unbounded orbits, but no loxodromics),
	\item lineal 
		($G$ has loxodromics, 
		all of which share the same two endpoints),
	\item quasi-parabolic
		($G$ has two loxodromics that share exactly one endpoint $\lambda,$
		and all loxodromics have an endpoint at $\lambda$), or
	\item non-elementary ($G$ has two loxodromics that share no endpoints).
	\end{enumerate}
Some sources define non-elementary 
to include quasi-parabolics \cite{osin16}*{\S3}.
However, if the action is acylindrical, 
then $G$ is neither parabolic nor quasi-parabolic
\cite{osin16}*{Theo\-rem 1.1}.
A group that admits a non-elementary,
acylindrical action on a hyperbolic space is called 
acylindrically hyperbolic.

\subsubsection*{Horofunctions}
Let $(X, d_X)$ be a metric space with basepoint $x_0$
and $G \to \Isom X$ 
a (not necessarily injective) group of isometries.
For each $g$ in $G,$
the horofunction $\rho_g: G \to \mathbb R$ is defined to be
\begin{equation}
	\rho_g(w)
	:= - d_X(x_0, gx_0) + d_X(gx_0, wx_0) 
	.
	\label{horo:eq}
	\end{equation}

Think of horofunctions as distance functions 
normalized to be zero at $x_0$;
see Figure \ref{horo:R}.
The normalization allows us 
to take limits of horofunctions.
Consider, for example, $G = X = \mathbb R.$
Then the horofunction $\rho_x(y) = -x + |y - x|$
approaches the identity function $y \mapsto y,$
as $x$ approaches $-\infty.$
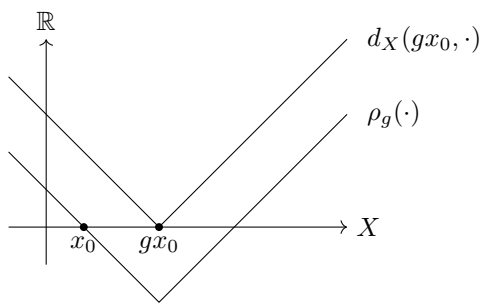
\begin{figure}
	\begin{tikzpicture}
		\draw[->] (0.5, -0.5) -- (0.5, 2.5) 
			node[above] {$\mathbb R$};
		\draw[->] (0, 0) -- (4.5, 0) node[right] {$X$};
		\fill (1, 0) circle (1.5pt) node [below] {$x_0$};
		\fill (2, 0) circle (1.5pt) node [below] {$gx_0$};
		\draw (0, 1) -- (2, -1) -- (4.5, 1.5)
			node [right, outer sep = 1ex] 	{$\rho_g(\cdot)$};
		\draw (0, 2) -- (2, 0) -- (4.5, 2.5)
			node [right, outer sep = 1ex] 	{$d_X(gx_0, \cdot)$};
		\end{tikzpicture}
	\caption{The horofunction
		$\rho_g(w) := d_X(gx_0, wx_0) - d_X(x_0, gx_0).$ 
		Fixing every horofunction to be zero at $x_0$ 
		allows us to take the limit of horofunctions $\rho_{g_i}(w)$ 
		as $g_ix_0$ approaches the boundary.}
	\label{horo:R}
	\end{figure}

The intuition in the slim triangles setting is that the horofunction measures 
minus how far you fellow-travel plus how far you then veer off:
\[	\rho_g(w_n) 
	\overset{\text{slim}\Delta\text s}
	\approx - \begin{smallmatrix}
		\text{how far $w_n$}
		\\ \text{follows $g$}
		\end{smallmatrix}
	+ \begin{smallmatrix}
		\text{how far $w_n$}
		\\ \text{veers off}
		\end{smallmatrix}
	.
	\]
See Figure \ref{horo:slim}.
\begin{figure}
	\begin{tikzpicture}[rounded corners = 8]
		\draw[dashed, yscale = 0.5]
			(0, 0) arc (180: 270: 4) node[right] {$\rho_g=0$}
			(0, 0) arc (180: 150: 4)
			;
		\draw[very thick, -{>[scale = 2]}]
			(0, 0) -- node[above]{$-$} 
			(1, 0) -- node[right]{$+$} 
			(1, -2)
			;
		\draw
			(1cm + 2pt, -2) node[below] {$w_nx_0$}
			-- ++(up: 2) 
			-- (6, 0) node[right] {$gx_0$}
			++(up: 2pt)
			-- ++(left: 5)
			-- ++(left: 1) node[left] {$x_0$}
			;
		\end{tikzpicture}
	\caption{When triangles are slim, 
		the horofunction $\rho_g(w_n)$ measures $\approx$
		$-($how far $w_n$ follows $g)$ +  $($how far it veers off).}
	\label{horo:slim}
	\end{figure}
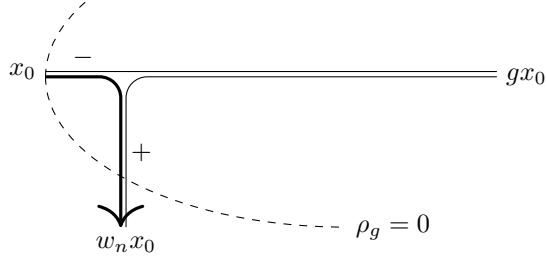

\subsection{\texorpdfstring{$(G, \mu)$-random walk $w_n$ on $(X, d_X, x_0)$}{Random walk}}\label{walk}
A stochastic process in a group~$G$
is a sequence of $G$-valued random variables $w_n.$
We say that $w_n$ is Markov if
$\mathbb P(w_n = g_n \mid w_1 = g_1, \dots, w_{n - 1} = g_{n - 1})$
equals $\mathbb P(w_n = g_n \mid w_{n - 1} = g_{n - 1}).$
In the construction below, the stochastic process $w_n$ in $G$ will be Markov,
but the stochastic process $w_nx_0$ in $X$ will not necessarily be Markov.

Let $X$ be a metric space with basepoint $x_0,$
and let $G \to \Isom X$ be a (not~necessarily injective) group of isometries
with probability measure~$\mu.$
We will call $\mu$ the step distribution,
and the product $(\Omega, \mathbb P) := (G, \mu)^{\mathbb N}$
the step space \cite{tiozzo15}*{\S2.1}.
From there, the $(G, \mu)$-random walk on $(X, d_X, x_0)$
is the stochastic process $(w_n)_{n \in \mathbb N}$
with $w_n: \Omega \to G$ defined to be
\[	(g_1, g_2, g_3, \dots) 
	\longmapsto g_1g_2g_3 \cdots g_n
	\]
for every positive integer $n.$
If called for, we define $w_0 \equiv 1_G.$
Each $w_n$ is referred to as the $n$th (random) location, and
has distribution $\mu^n,$ the $n$-fold convolution:
\[	\mathbb P(w_n = g) 
	= \mu^n(g)
	= \textstyle\sum_{g_1 \cdots g_n = g} 
	\mu(g_1)\cdots \mu(g_n)
	.
	\]
Define the $n$th step to be
the random variable $s_n := w_{n-1}^{-1} w_n,$
which by construction is the $n$th projection 
$s_n: (G, \mu)^{\mathbb N} \to G.$
It follows that the steps are independent and identically distributed 
$G$-valued random variables with law~$\mu.$
Each outcome $(w_n(\omega))_{n \in \mathbb N}$ 
is a sequence of elements of $G,$ 
and is referred to as a sample path.

The map $G^{\mathbb N} \to G^{\mathbb N}$
defined $\omega \mapsto (w_n(\omega))_{n \in \mathbb N}$
induces a pushforward probability measure
on the codomain (range) $G^{\mathbb N}.$
The codomain endowed with this pushforward measure
is called variously the path space, location space,
or Kolmogorov representation space \cite{sawyer95}*{\S2}.
One can alternatively define the path space first
and then take each $w_n$
to be the $n$th projection from the path space~\cite{maher12}*{\S2.1}.

\subsubsection*{Reflected random walk}
Given a group $G$ and a probability measure $\mu,$
we define the reflected probability measure $\check \mu(g) := \mu(g^{-1}).$
The $(G, \check\mu)$-random walk 
is sometimes called the reflected random walk
$(\check w_n)_{n \in \mathbb N}.$

The bi-infinite $(G, \mu)$-random walk $(w_n)_{n \in \mathbb Z}$
is defined to be the unique bi-infinite sequence of $G$-valued random variables
with $w_0 \equiv 1_G$ 
and steps $s_n := \smash{w_{n-1}^{-1} w_n}$
that are independently and identically distributed
according to $\mu.$
It follows that $w_n$ equals $s_1\cdots s_n$ 
and $\smash{s_0^{-1} \cdots s_{n+1}^{-1}}$
for positive and negative integers $n,$ respectively.
In particular, the sequence $(w_{-n})_{n \in \mathbb N}$ 
is a random walk with
the same distribution as the $(G, \check\mu)$-random walk.

\begin{definition}[$a$-iterated random walk] \label{iter}
Given a random walk $(w_n)_{n \in \mathbb N}$ 
and integer $a>0,$
the $a$-iterated random walk
is the sequence of $G$-valued random variables 
$(w_{ai})_{i \in \mathbb N}$
indexed by integers $i > 0.$
\end{definition}

\begin{definition}[Exponential tail]\label{tail}
The $(G, \mu)$-random walk $w_n$ 
on the metric space $(X, d_X)$
with basepoint $x_0$
is said to have \emph{exponential tail} in $X$
if there exists $\lambda > 0$ such that
\begin{equation}
	\sum\nolimits_{g \in G} \mu(g) e^{\lambda|g|} 
	< \infty
	,
	\label{tail:eq}
	\end{equation}
where $|g|$ denotes $d_X(x_0, gx_0).$
More generally, a real-valued stochastic process $Y_n$
is said to have \emph{uniformly exponential tails} in $\mathbb R$ 
if there exists a single $\lambda > 0$ such that for all~$n,$
\[	\mathbb E\big(e^{\lambda|Y_n|}\big) 
	< \infty
	.
	\]
Note that there need not be a uniform upper bound for all $n.$
\end{definition}

If $Y_n$ has uniformly exponential tails in $\mathbb R,$ then so does $-Y_n.$
If $\mu$ satisfies~\eqref{tail:eq} for some $\lambda > 0$
then so does every convolution power $\mu^n$ for the same $\lambda.$
Indeed, if $\sum_g  \mu(g) e^{\lambda|g|} = \ell$ finite, then
\begin{align*}
		\ell^2
		&= \textstyle\sum_a \mu(a) e^{\lambda|a|}  \ell
	\\	&= \textstyle\sum_{a, b} \mu(a) \mu(b) e^{\lambda|a|} e^{\lambda|b|} 
	\intertext{(recall that $|a|$ and $|b|$ 
		denote $d_X(x_0, ax_0)$ and $d_X(x_0, bx_0),$ respectively)} 
		&\ge \textstyle\sum_{a, b} \mu(a) \mu(b) e^{\lambda|ab|}
	\intertext{(since 
		$d_X(a^{-1}x_0, x_0) + d_X(x_0, bx_0) \ge d_X(a^{-1}x_0, bx_0)$
		by the triangle inequality)}
		&= \textstyle\sum_{a, g} \mu(a) \mu(a^{-1}g) e^{\lambda|g|} 
	\\	&= \textstyle\sum_g \mu^2(g) e^{\lambda|g|} 
		.
	\end{align*}
Thus $Y_n = |w_n|$ has uniformly exponential tails
if $\mu$ has exponential tail.

\begin{definition}[positive linear progress]\label{drift}
We say that the $(G, \mu)$-random walk $w_n$ 
on the metric space $(X, d_X)$ with basepoint $x_0$
makes \emph{linear progress with exponential decay} in $X$ 
if there is $C > 0$ so that for all~$n,$
\begin{equation}
	\mathbb P(|w_n| \le n/C) 
	\le Ce^{-n/C}
	,
	\label{drift:eq0}
	\end{equation}
makes \emph{positive linear progress} 
(or has \emph{positive drift}) in $X$~if
\begin{equation}
	\liminf_{n \to \infty} |w_n|/n 
	> 0
	\text{ almost surely},
	\label{drift:eq1}
	\end{equation}
is \emph{asymptotically probability zero}
on bounded sets in $X$ 
if for every~$r,$
\begin{equation}
	\mathbb P(|w_n| \le r) 
	\overset{_n\,}
	\to 0
	\label{drift:eq2}
	\end{equation}
\cite{maher-tiozzo}*{\S5.2,\,p.\,40},
and is \emph{not positive-recurrent} on bounded sets in $X$~if
\begin{equation}
	\mathbb E(|w_n|) 
	\overset{_n\,}
	\to \infty
	,
	\label{drift:eq3}
	\end{equation}
where $|w_n|$ denotes $d_X(x_0, w_nx_0).$
Recall that a Markov chain is either 
transient, positive-recurrent, or neither (null-recurrent)
\cite{woess00}*{I.1.B}.
For linear progress with exponential decay,
it suffices to show that there exist $c_i > 0$ so that
for all but finitely many $n,$
the probability $\mathbb P(|w_n| \le n/c_3) \le$
a finite sum $\sum_j c_{2, j} e^{-n/c_{1, j}}.$

More generally, a (not necessarily Markov) sequence 
of real-valued random variables $Z_n$
makes linear progress with exponential decay in $\mathbb R$
if there is $C > 0$ so that for all $n,$
\(	\mathbb P(Z_n \le n/C) 
	\le Ce^{-n/C}
	\);
see equation~\eqref{intro:drift}.
Properties~\eqref{drift:eq1}--\eqref{drift:eq3} 
generalize to $Z_n$ in $\mathbb R$
by replacing $|w_n|$ with $|Z_n|.$
\end{definition}

\begin{remark}\label{drift:lem}
Equations~\eqref{drift:eq0}--\eqref{drift:eq3} 
are in descending order of strength:
\[	\begin{smallmatrix}
		\text{linear progress with}
		\\ \text{exponential decay}
		\end{smallmatrix}
	\implies
	\begin{smallmatrix}
		\text{positive linear}
		\\ \text{progress}
		\end{smallmatrix}
	\implies
	\begin{smallmatrix}
		\text{asymptotically}
		\\ \text{probability zero}
		\end{smallmatrix}
	\implies
	\begin{smallmatrix}
		\text{not positive-}
		\\ \text{recurrent.}
		\end{smallmatrix}
	\]

The first claim is that linear progress with exponential decay
is indeed a special case of positive linear progress.
For any sequence of events $A_n,$
the Borel-Cantelli Lemma
states that $\sum{\!_n\,} \mathbb P(A_n) < \infty$
implies $\mathbb P(\limsup{\!_n\,} A_n) = 0.$
Exponential decay implies 
\(\sum_n \mathbb P(|Z_n| \le n/C) 
	\)
is finite, and so by Borel-Cantelli,
$|Z_n| \le n/C$ for only finitely many $n$ almost surely.
Thus, $\liminf_n \frac1n |Z_n| \ge 1/C$ almost surely.

The second claim is that positive linear progress in turn implies 
bounded sets have zero asymptotic probability.
To show the contrapositive,
assume there exists an $r$ such that 
the sequence of events $A_n := \{|Z_n| \le r\}$
has probabilities uniformly bounded away from zero
$\liminf_n \mathbb P(A_n) > 0.$
Then there is an $\epsilon > 0$ and a subsequence $n_k$
such that $\inf_k \mathbb P(A_{n_k}) \ge \epsilon.$
It follows that $\mathbb P(\limsup{}\!_k A_{n_k}) \ge \epsilon,$
i.e., the subsequence of events $A_{n_k}$
occurs infinitely often with probability $\ge \epsilon.$
But then so does the full sequence:
$\mathbb P (\limsup A_n) \ge \epsilon.$
The event $\limsup A_n$ implies (is contained in) 
the event $\liminf_n \frac1n|Z_n| = 0,$
which thus also occurs with probability $\ge \epsilon.$
In other words, 
it is not the case that $\liminf_n \frac1n|Z_n| > 0$ almost surely.

The last claim is that $Z_n$ cannot be 
both positive-recurrent and asymptotically probability zero 
on bounded subsets in $\mathbb R.$
Suppose $Z_n$ is positive-recurrent: 
$\sup_n \mathbb E(|Z_n|) \le L$ finite.
By Markov's inequality, $\mathbb P(|Z_n| < 3L) \ge 2/3.$
Thus $\mathbb P(|Z_n| < r)$ does not $\smash{\overset {_n\,} \to 0}$
for every $r$.
\end{remark}

\subsection{Shadows}
Consider the metric space $(X, d_X)$
and the group of isometries $G \to \Isom X.$
Fix a basepoint $x_0 \in X$ and put $|g| = d_X(x_0, gx_0).$
For each $g$ in $G$ and real number $r,$
the shadow about $gx_0$ of radius $r$ is defined to be the set
\begin{align*}
	Shad(g, r) 
	&:= \{w \in G: (w, g)_1 \ge |g| - r\}
	.
	\notag
	\intertext{For each real number $d,$
		we define the shadow about $gx_0$ of depth $d$ to be the set
		}
	S(g, d) 
	&:= \{w \in G: (w, g)_1 \ge d\}
	;
	\end{align*}
in other words, $w \in S(g, d)$ if and only if $(w, g)_1 \ge d.$
Note that it is possible to define shadows 
with basepoints other than $x_0,$
but we will not need such shadows.
Most sources define shadows in terms of radius $Shad(g, r).$
However, following Maher \cite{maher12}*{\S2.3},
we will always define shadows in terms of depth $S(g, d).$
The two definitions are equivalent $S(g, d) = Shad(g, |g| - d),$
and shadows in terms of depth will be
more convenient for our purposes,
including Definition~\ref{s} below.

By the triangle inequality~\eqref{triangle}, 
if $d$ is negative or zero, then $S(g, d) = G,$ 
and if $d$ exceeds $|g|,$ then $S(g, d)$ is empty.

\begin{definition}[Shadow decay]\label{s}
We say that shadows in $X$ decay in $d$
if both $\mu^n(S(g, d))$ 
and $\smash{\check\mu^n(S(g, d)) \overset{_d\,}\to 0}$
in the sense that
\[	\adjustlimits \lim_{d \to \infty} \sup_{n, g} 
	\, \mathbb P(w_n \in S(g, d)) =
	\adjustlimits \lim_{d \to \infty} \sup_{n, g} 
	\, \mathbb P(w_n^{-1} \in S(g, d)) = 0
	,
	\]
where we take each supremum over all 
$n \in \mathbb N$ and $g \in G.$
In other words
there is a function $f$ that decays $f(d) \to 0$ as $d \to \infty$
such that $\mu^n(S(g, d)) \le f(d)$ for all $n, g,$
and there is an analogous function $f'$ for $\check \mu.$
Note that if $f, f'$ differ, we may replace them with $\max\{f, f'\}.$
However, the optimal $f$ for $\mu$ and $f'$ for $\check\mu$ may differ.
See Maher and Tiozzo~\cite{maher-tiozzo}*{Cor\-ol\-lary 5.3}.
\end{definition}

It is known 
\citelist{\cite{maher12}*{Lemma 2.10} 
	\cite{maher-tiozzo}*{Equation 16}}
that when the support of $\mu$ is bounded in $X,$
shadows decay exponentially in $d,$
meaning that there is some constant $C > 0$
so that for every $d$ and $n,$
$\mu^n(S(g, d)) \le Ce^{-d/C}.$
Crucially, $C$ does not depend on $n$ (or $d$).\footnote{
	Exponential decay of shadows in $d$ for fixed $n$
	follows immediately from exponential tail for $\mu.$
	Indeed, take $\lambda > 0$ 
	so that $\sum_g \mu(g)e^{\lambda|g|}$ is finite.
	Then
	\[\smash{
		\mathbb P((w_n, g)_1 \ge d/\lambda)
		\le \mathbb P(|w_n| \ge d/\lambda)
		= \mathbb P(|w_n| \ge d/\lambda)
		= \mathbb P(e^{\lambda|w_n|} \ge e^d)
		\le \mathbb E(e^{\lambda|w_n|}) e^{-d}
		}\]
	where the first inequality follows from 
	$(w_n, g)_1 \le |w_n|$ (from the triangle inequality)
	and the last inequality follows from Markov's Inequality.
	}
However, the proof in this paper will not use exponential decay of shadows.

\section{Proofs}
The results needed for Theorem \ref{thm} 
are organized as follows.
Section \ref{nel:sec} cites the convergence theorem for 
non-elementary weakly hyperbolic random walks
and extracts positive linear progress and shadow decay.

Section \ref{back:sec} 
uses these two properties along with hyperbolicity 
to prove a stronger shadow decay result
(uniform shadow decay).
Section \ref{horo:sec} 
uses this shadow decay result along with exponential tails
to prove a property concerning horofunctions
(uniformly positive horofunctions).

Section~\ref{prog:sec}
uses that property
to derive linear progress with exponential decay
for the $a$-iterated random walk
by way of a progress criterium
(uniformly positive progress).
Lastly, Sec\-tion \ref{iter:sec}
uses exponential tails again to pass from 
the $a$-iter\-ated random walk to the full random walk.
\[\begin{tikzcd}[row sep = 0mm, column sep = 15mm]
		\stackthree{non-el}{wk-hyp}{(Def \ref{nel:def})}
		\rar[Rightarrow]{\smash{\text{\cite{maher-tiozzo}}}}&
		\stackthree{pos lin prog}{\& shad decay}{(Def \ref{drift} \& \ref{s})}
		\rar[Rightarrow]{\text{Lem \ref{back:lem}}}[swap]{\text{\& hyp}}&
		\stackthree{unif shad}{decay}{(Crit \ref{back:def})}
		\rar[Rightarrow]{\text{Lem \ref{horo:lem}}}
			[swap]{\text{\& exp tail}}&
		\stackthree{unif pos}{horofns}{(Crit \ref{horo:def})}
	\\	{}\rar[Rightarrow]{\text{Lem \ref{prog:cor}}}&
		\stackthree{unif pos}{progress}{(Crit \ref{prog:def})}
		\rar[Rightarrow]{\smash{\text{Prop \ref{prog:lem}}}}&
		\stackthree{$a$-iter lin prog}{exp decay}{(Eq \ref{thm:iter})}
		\rar[Rightarrow]{\text{Cor \ref{iter:cor}}}
			[swap]{\text{\& exp tail}}&
		\stackthree{lin prog}{exp decay}{(Def \ref{drift})}
	\end{tikzcd}\]
Our use of metric hyperbolicity
is confined to Sections \ref{nel:sec} and \ref{back:sec}.

\subsection{Convergence}\label{nel:sec}
We cite the convergence of
non-elementary, weakly-hyperbolic random walks
to obtain positive linear progress and shadow decay.

\begin{definition}[Non-elementary, weakly hyperbolic]\label{nel:def}
Let $X$ be a metric spa\-ce, not necessarily locally compact.
A countable group of isometries $G \to \Isom X$ is called
\begin{itemize}
	\item \emph{weakly hyperbolic} if
	$X$ is separable, geodesic, and $\delta$-hyperbolic, and
	\item \emph{non-elementary} if
	$G$ contains two independent loxodromics,
	\end{itemize}
where loxodromics are called independent 
if they have disjoint fixed point sets 
on $\partial X,$ the Gromov boundary 
\citelist{\cite{maher-tiozzo}*{\S1} \cite{osin16}*{Theorem 1.1}}.
The $(G, \mu)$-random walk is called non-elementary 
if $\langle\supp(\mu)\rangle$ is non-elementary.
Our definition of non-elementary
follows Maher and Tiozzo \cite{maher-tiozzo}*{\S1}
and excludes the quasi-parabolic case 
(see Section~\ref{metric}, page~\pageref{classify}).
\end{definition}

\begin{theorem}[\cite{maher-tiozzo}*{Theorems 1.1, 1.2 and Corollary 5.3}]\label{nel:thm}
If the random walk $w_n$ on $X$
is non-elementary and weakly hyperbolic (Definition \ref{nel:def}),
then $w_nx_0$ converges to the Gromov boundary almost surely,
and the associated hitting measure is non-atomic.
In particular, 
$w_n$ makes positive linear progress in $X$ (Definition \ref{drift})
and shadows in $X$ decay in $d$ (Definition \ref{s}).
\end{theorem}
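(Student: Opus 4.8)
Since the statement is quoted essentially verbatim from \cite{maher-tiozzo}*{Theorems 1.1, 1.2 and Corollary 5.3}, the plan is to invoke that work directly; what follows sketches the shape of the argument and flags where the hypotheses enter. The first obstacle is that $X$ is not assumed locally compact, so $\partial X$ and $\overline X$ need not be compact and one cannot extract a stationary measure by weak-$*$ compactness of $\mathrm{Prob}(\overline X)$. The remedy is to replace $\overline X$ by the horofunction compactification: the set of $1$-Lipschitz maps $X \to \mathbb R$ vanishing at $x_0$, topologized by pointwise convergence, sits inside $\prod_{y \in X}[-|y|,|y|]$ as a closed subset and is therefore compact, and separability of $X$ makes it metrizable (pointwise convergence on a countable dense set suffices for $1$-Lipschitz functions). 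The orbit map $y \mapsto d_X(y,\cdot) - d_X(y,x_0)$ embeds $X$ there, and a Krylov--Bogolyubov / Markov--Kakutani averaging argument produces a $\mu$-stationary probability measure $\nu$ on this compact space.

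First I would establish non-atomicity, since everything downstream leans on it. Using two independent loxodromics $g_1, g_2 \in \langle\supp\mu\rangle$ and their north--south dynamics on $\overline X$ (large powers $g_i^{\pm m}$ contract the complement of a neighbourhood of one endpoint toward the other), one argues that if $\nu$ carried atoms, the finite set $F$ of atoms of maximal $\nu$-mass would be essentially invariant under $\supp\mu$ by stationarity; pushing $F$ around by $g_1^{\pm m}$ and $g_2^{\pm m}$ and using that $\{g_1^+,g_1^-\}$ and $\{g_2^+,g_2^-\}$ are disjoint forces $F = \varnothing$. A variant of the same dynamics rules out $\nu$-mass on the copy of $X$ inside the compactification, so $\nu$ concentrates on the part of the horofunction boundary that maps onto $\partial X$ and descends to a non-atomic Borel measure there.

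Next, convergence: stationarity makes $(w_n)_*\nu$ a bounded measure-valued martingale on a compact metrizable space, so it converges almost surely to a random limit $\nu_\omega$; a Furstenberg-type maximal-atom argument, again using non-atomicity of $\nu$, forces $\nu_\omega = \delta_{\mathrm{bnd}(\omega)}$ for a point $\mathrm{bnd}(\omega) \in \partial X$, and tracking the orbit gives $w_n x_0 \to \mathrm{bnd}(\omega)$ in $\overline X$. The hitting measure is then the law of $\mathrm{bnd}(\omega)$, namely $\nu$ itself, hence non-atomic. Shadow decay (Definition \ref{s}) follows: by the four-point condition a shadow $S(g,d)$ lies in the preimage of a boundary neighbourhood whose size shrinks as $d \to \infty$ uniformly in $g$, so $\mu^n(S(g,d)) \le \sup_\eta \nu(\text{that neighbourhood of }\eta) \to 0$ uniformly in $n$ and $g$ by non-atomicity and the convergence; applying the same to $\check\mu$, whose walk is the reversed walk $w_{-n}$, handles the $w_n^{-1}$ clause.

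The step I expect to be the real work is positive linear progress --- upgrading ``$|w_n|/n$ converges'' to ``the limit is positive.'' Subadditivity $|w_{n+m}| \le |w_n| + |w_n^{-1}w_{n+m}|$, with $w_n^{-1}w_{n+m}$ distributed as $w_m$ and independent of $w_n$, puts us in Kingman's subadditive ergodic theorem, so $\ell := \lim_n |w_n|/n$ exists almost surely and is a.s.\ constant; the task is to rule out $\ell = 0$. Here I would follow Maher--Tiozzo: combine shadow decay with the a.s.\ convergence $w_n x_0 \to \mathrm{bnd}(\omega)$ to show that, with probability bounded away from $0$, the walk eventually enters and never leaves a shadow $S(g,d)$, and then run a nested-shadows / Borel--Cantelli counting argument along the limiting geodesic ray establishing that each successive nested shadow costs the walk at least a fixed amount of distance, so $|w_n| \gtrsim cn$. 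This is exactly the point at which genuine shadow decay --- hence the two independent loxodromics --- is indispensable, and it is worth noting that no acylindricity and no local compactness is used anywhere.
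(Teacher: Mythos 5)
Your approach coincides with the paper's: Theorem \ref{nel:thm} is not proved in this paper at all but simply imported from Maher and Tiozzo \cite{maher-tiozzo}, and your outline (horofunction compactification to substitute for the missing local compactness, a stationary measure, non-atomicity via two independent loxodromics, martingale convergence to a boundary point, then shadow decay and drift) is a faithful summary of how that cited proof runs. The only place your sketch departs from the cited argument is the appeal to Kingman's subadditive ergodic theorem, which needs a finite first moment that neither this theorem nor \cite{maher-tiozzo} assumes (the paper explicitly notes that no moment or tail conditions are required here); Maher--Tiozzo obtain $\liminf_n |w_n|/n > 0$ directly from decay of shadows without first extracting a limit, so in the stated generality you should drop the Kingman step and lean on their argument there as you already do elsewhere.
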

Note that we do not require any moment or tail conditions on $\mu,$
we do not require $X$ to be locally compact,
and we do not require the action of $G$ to be acylindrical.

\subsection{Shadows}\label{back:sec}
In this section we use convergence to the boundary
(specifically, positive linear progress and shadow decay)
along with hyperbolicity
to prove a stronger shadow decay result.

In general, $2(g,w)_1$ equals 
$d_X(gx_0, x_0) + d_X(x_0, wx_0) - d_X(gx_0, wx_0),$
the difference between distance and displacement
when one travels along a concatenation of geodesics
$[gx_0, x_0] \cup [x_0, wx_0].$
Thus the Gromov product $(g,w)_1$ 
measures the inefficiency
in traveling $gx_0$ to $x_0$ to $wx_0,$
and so shadow decay controls the inefficiency
in a random walk as it escapes to infinity,
even when $X$ is not hyperbolic.

If $X$ is hyperbolic,
then this inefficiency looks like a ``backtrack."
In this section, we use hyperbolicity
to prove the two ``single backtracks" in Figure \ref{back:spse}
imply the one ``double backtrack" in Figure \ref{back:claim}.

After this section, 
we will not use hyperbolicity for any of the remaining proofs.

\begin{criterion}[Uniform shadow decay]\label{back:def}
We say that shadows in $X$ decay uniformly
if $\smash{\mathbb P((w_kh, g)_1 \ge d) \overset{_{d,k}}\longrightarrow 0}$
in the sense that
for all $\epsilon > 0,$
for every depth $d\ge$ some $d'(\epsilon),$
and for every time $k\ge$ some $k'(d),$
\begin{equation}
	\sup_{g, h \in G} 
	\mathbb P(w_kh \in S(g, d))
	< \epsilon
	.
	\label{back:eq}
	\end{equation}
\end{criterion}

Note that for $n \ge k,$ 
the random variables $w_k$ and $\kn$
are independent, and so 
$\mathbb P(w_kh \in S(g, d))$
is equal to
$\mathbb P(w_n \in S(g, d) \mid \kn = h).$
Hence equation~\eqref{back:eq} means that
shadows decay independently from the value of $\kn,$
and in particular, independently from 
the distance $|\kn|.$
For $n$ much larger than $k,$
this distance $|\kn|$
approximates progress~$|w_n|.$

\begin{lemma}\label{back:lem}
Consider the $(G, \mu)$-random walk $w_n$
on the metric space $X$
with basepoint $x_0.$
Suppose $X$ is $\delta$-hyperbolic,
bounded subsets have asympototic probability zero 
(Definition \ref{drift}), and
shadows decay (Definition \ref{s}).
Then shadows decay uniformly
(Criterion~\ref{back:def}).
\end{lemma}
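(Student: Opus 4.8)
The plan is to fix $\epsilon > 0$ and show that for $d$ large (depending on $\epsilon$) and $k$ large (depending on $d$), we have $\sup_{g,h} \mathbb P(w_kh \in S(g,d)) < \epsilon$. The heart of the argument is the geometric observation sketched in Figures~\ref{back:spse} and~\ref{back:claim}: if $w_kh$ fellow-travels $g$ for a long distance (i.e.\ $(w_kh,g)_1 \ge d$), then by hyperbolicity the geodesic $[x_0, w_kx_0 h x_0]$ — broken at the intermediate point $w_kx_0$ — must itself backtrack, which forces either $(w_k, g)_1$ to be large or the Gromov product measuring how far $w_kh$ veers back past $w_k$ (something like $(w_k^{-1}\cdot(\text{stuff}))$, i.e.\ an inefficiency in the reflected walk seen from $w_k$) to be large. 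Concretely, I would apply the Gromov four-point condition~\eqref{4ptX} to the four points $x_0$, $w_kx_0$, $w_khx_0$, $gx_0$ to split $(w_kh,g)_1 \ge d$ into a bounded number of cases, each of which says some Gromov product among these points is $\gtrsim d - O(\delta)$.

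In more detail, the three relevant ``inefficiencies'' are: (a) $(w_k, g)_1$ large, meaning $w_k$ already lies deep in the shadow $S(g,\cdot)$; (b) the quantity measuring how far the $h$-step backtracks relative to the geodesic $[x_0, w_kx_0]$ at the vertex $w_kx_0$, which translated to the walk started at $w_k$ is a Gromov product $(\,\cdot\,)$ for the reflected walk — so this is controlled by $\check\mu^{(\text{age of }h)}(S(\cdot,\cdot))$ via shadow decay for $\check\mu$; and (c) the possibility that $w_k$ is simply close to $x_0$ or to $gx_0$, i.e.\ $|w_k|$ is small, which is where positive linear progress (asymptotic probability zero on bounded sets) comes in: for $k$ large, $\mathbb P(|w_k| \le R) < \epsilon/3$ for any fixed $R$. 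Choosing $d$ first so that the shadow-decay functions $f(d/2 - O(\delta)), f'(d/2 - O(\delta)) < \epsilon/3$ (using Definition~\ref{s}, whose bound is uniform in $n$ and $g$, so it survives conditioning on the age of $h$ and on $g$), and then choosing $k$ large enough to handle the bounded-set term, gives the result by a union bound over the $O(1)$ cases.

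The key point making this go through is that shadow decay (Definition~\ref{s}) is stated with the supremum over \emph{all} times $n$ and \emph{all} basepoints-of-shadow $g$ inside the limit, so the bound $\mu^n(S(g',d')) \le f(d')$ and $\check\mu^n(S(g',d')) \le f'(d')$ hold uniformly; this is exactly what lets us control term (b) regardless of how old $h$ is (the ``age'' $n - k$ is unbounded) and regardless of the isometry $g$. The independence of $w_k$ from $w_k^{-1}w_n$ noted right after Criterion~\ref{back:def} is what licenses treating the $w_k$-part and the $h$-part as governed by independent copies of the walk and reflected walk respectively.

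The main obstacle I expect is bookkeeping the hyperbolicity reduction cleanly: one has to identify the right Gromov products among $x_0, w_kx_0, w_khx_0, gx_0$ — in particular translating the ``backtrack at $w_kx_0$'' into a Gromov product for the walk \emph{based at $w_kx_0$} (equivalently, applying $w_k^{-1}$ and getting a statement about the reflected walk $(\,\check w_{\,\cdot}\,)$), and making sure the additive $\delta$-losses from the four-point condition get absorbed by passing from depth $d$ to depth $\tfrac12 d - c\delta$ for a suitable constant $c$. Once the correct triple of Gromov products is pinned down, each one is killed by, respectively, shadow decay for $\mu$, shadow decay for $\check\mu$, and asymptotic probability zero on bounded sets, and the union bound finishes the proof.
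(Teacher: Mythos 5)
Your proposal is correct and follows essentially the same route as the paper: apply the four-point condition to $x_0,$ $w_kx_0,$ $w_khx_0,$ $gx_0$ so that $(w_kh, g)_1 \ge d$ forces $(w_k, g)_1 \ge d - \delta,$ or $(w_k^{-1}, h)_1 \ge d,$ or $|w_k| \le 2d,$ and then kill these three events by shadow decay for $\mu,$ shadow decay for $\check\mu,$ and asymptotic probability zero on bounded sets, choosing $d$ before $k$ and taking a union bound. One bookkeeping correction: the backtrack event at $w_kx_0$ is, by equivariance, $(w_k^{-1}, h)_1 \ge d,$ i.e.\ $w_k^{-1} \in S(h, d),$ whose probability is $\check\mu^{k}(S(h, d))$ --- the reflected walk at time $k$ with the fixed element $h$ as shadow center, not $\check\mu$ at the ``age of $h$'' --- and it is the uniformity over all times $n$ and all $g$ in Definition~\ref{s} that makes this bound independent of $k$ and $h.$
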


Note that positive linear progress implies 
that bounded subsets have asympototic probability zero
(Remark~\ref{drift:lem}).
We do not assume the action of $G$ to be acylindrical.

\begin{proof}[Proof. Hyperbolic geometry step.]
Recall that we write $(w, g)_1$ to denote
\(	\textstyle \frac12 |w|
	+ \frac12 |g|
	- \frac12 d_X(wx_0, gx_0)
	.
	\)
As depicted in Figure \ref{back:spse},
suppose we are given $g, w, h$ in $G$
and $d > 0$ such that
\begin{align}
	|w| &> 2d,
	\label{back:w}
	\\ (w, g)_1 &< d - \delta, \text{ and}
	\label{back:g}
	\\ (\smash{w^{-1}}, h)_1 &< d
	.
	\label{back:h}
	\end{align}
As in Figure \ref{back:claim},
we claim that $(wh, g)_1 < d.$

By $G$-equivariance of the Gromov product,
$(w^{-1}, h)_1$ equals $(1, wh)_w$ 
and so~\eqref{back:h} indeed controls 
the backtrack at the bottom in Figure~\ref{back:spse}.
Note that equations~\eqref{back:g} and~\eqref{back:h}
are equivalent to $w \not\in S(g, d - \delta)$
and $w^{-1} \not\in S(h, d),$ respectively,
and that the claim is equivalent to 
$wh \not\in S(g, d).$

\begin{figure}
	\begin{tikzpicture}[rounded corners = 8]
		\fill[pattern = north east lines, rounded corners = 0] (0, -4pt) 
			rectangle 
			node[above, outer sep = 4pt] 
			{$\overset{\eqref{back:g}} < d - \delta$} 
			(2, 2pt) 
			;
		\draw (0, 0) node[left] {$x_0$} -- (6, 0) 
			node[right] {$gx_0$};
		\fill[pattern = north west lines, rounded corners = 0, yshift = -2pt] 
			(2cm - 2pt, -2) 
			rectangle 
			node[right, outer sep = 2pt] 
			{$\overset{\eqref{back:h}} < d$} 
			(2cm + 4pt, -1) 
			;
		\draw[yshift = -2] 
			(0, 0) -- (2, 0) -- 
			node[left, pos = 0.3] 
			{$|w|\overset{\eqref{back:w}} > 2d \,$} 
			(2, -2) 
			node[below] {$wx_0$};
		\draw[yshift = -2, xshift = 2] 
			(2, -2) -- (2,-1) -- (4,-1) 
			node[right] {$whx_0$};
		\end{tikzpicture}
	\caption{In the geometric step 
		of the proof of Lemma \ref{back:lem},
		we assume shadow decay 
		$(\ref{back:g},\ref{back:h})$
		and positive linear progress \eqref{back:w}.
		Figure \ref{back:claim} depicts the corresponding claim.}
	\label{back:spse}
	\end{figure}
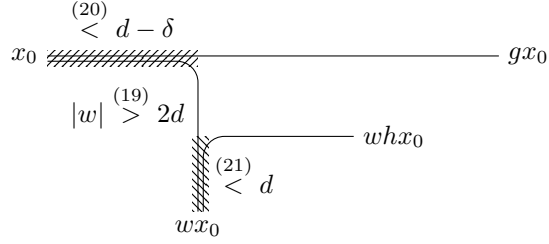

We want to bound how far 
$wh$ and $g$ can fellow-travel.
By the Gromov four-point condition for hyperbolicity
(Figure~\ref{4pt:fig}), 
$wh$ cannot fellow-travel both $w$ and $g$ 
beyond distance $(w, g)_1 + \delta.$
That distance is in turn 
bounded $(w, g)_1 + \delta < d$ by assumption~\eqref{back:g}.
Putting the two together, we have that
\begin{align}
	(wh, w)_1 
	\text{ or } 
	(wh, g)_1 
	\overset {\eqref{4ptG}}\le 
	(w, g)_1 + \delta
	\overset {\eqref{back:g}} < d
	.
	\label{back:or}
	\end{align}
Therefore if we can show $(wh, w)_1 > d,$
then $(wh, g)_1$ must be $< d.$
But by our other two assumptions
$(\ref{back:w}, \ref{back:h})$
and the definition of Gromov product,
\begin{equation}
	\begin{aligned}
			d  \overset{\text{$(\ref{back:w}, \ref{back:h})$}}
			< {}& |w| - (w^{-1},h)_1 
		\\	= {}& |w| - \textstyle\frac12|w| - \frac12|h| + \frac12|wh| 
		\\	= {}& \textstyle\frac12|w| - \frac12|h| + \frac12|wh| 
		\\	= {}& (wh, w)_1
			.
		\end{aligned}
	\label{back:d}
	\end{equation}
Therefore $(wh, g)_1 < d,$ as claimed.

In words, the calculation in~\eqref{back:d}
shows that $w$ is too long~\eqref{back:w}
for the subsequent short backtrack~\eqref{back:h} to cancel out,
and thus $wh$ fellow-travels $w$ instead of $g.$
See also Figure~\ref{back:tree}.
	
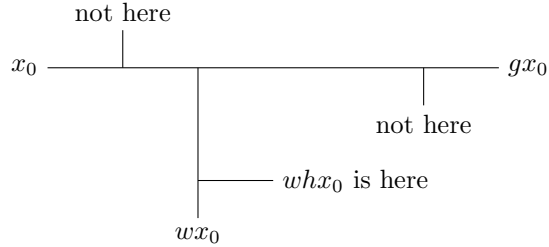
\begin{figure}
	\begin{tikzpicture}
		\draw (0, 0) node[left] {$x_0$} -- (6,0) 
		node[right] {$gx_0$};
		\draw (2, 0) -- (2, -2) node[below] {$wx_0$};
		\draw (2, -1.5) -- (3, -1.5) 
		node[right] {$whx_0$ is here};
		\draw (1, 0) -- (1, 0.5) node[above] {not here};
			\draw (5, 0) -- (5, -0.5) node[below] {not here};
		\end{tikzpicture}
	\caption{Equation \eqref{back:or} means in words that 
		the geodesics between four points in a hyperbolic space 
		look approximately like a tree in one of 
		three possible configurations, 
		and so showing that $wh$ branches off $w$ 
		rules out the case where $wh$ branches off~$g.$}
	\label{back:tree}
	\end{figure}

\subsubsection*{Probabilistic step}
Let $\epsilon > 0.$ We claim that we can find 
sufficiently large real $d > 0$
and integer $k > 0$
so that all three equations~\eqref{back:w}--\eqref{back:h}
hold simultaneously
for $w = w_k$ for all $g, h$ in $G$ 
with probability $> 1 - \epsilon.$

It turns out we must first choose $d$ 
and then choose $k$ depending on $d$ 
(hence the order of quantifiers in Criterion \ref{back:def}).
By decay of shadows (Definition \ref{s}),
we can find a large real $d' > 0$ 
(depending only on $\mu$ and $\epsilon$) 
so that for all real $d \ge d',$ integer $k > 0,$ and $g, h$ in $G,$
the two events
\begin{align}
	\{(w_k, g)_1 < d - \delta\}
	\label{back:g2}
	\\
	\{(w_k^{-1}, h)_1 < d \} 
	\label{back:h2}
	\end{align}
each occur with probability greater than~$1 - \epsilon/3.$
These two events~\eqref{back:g2} and~\eqref{back:h2} each occur 
when $w_k$ and $w_k^{-1}$ 
avoid $S(g, d - \delta)$ and $S(h, d),$ respectively.
Fix arbitrary $d \ge d'.$
Now using our zero asymptotic probability hypothesis (Definition \ref{drift}),
we can find a large integer $k' > 0$ (depending on $d$) 
so that for all integers $k \ge k',$ the~event
\begin{align*}
	A &:= \{|w_k| > 2d\}
	\intertext{has $\mathbb P(A) > 1 - \epsilon/3.$
		Fix arbitrary $k\ge k'.$
		Since~\eqref{back:g2} and~\eqref{back:h2} 
		hold for all $n \in \mathbb N$ 
		and in particular for $n = k,$
		therefore for all $g, h$ in $G,$
		}
	B &:= \{ (w_k, g)_1 < d - \delta \}
	\\ C &:= \{ (w_k^{-1}, h)_1 < d \}
	\end{align*}
have $\mathbb P(B) > 1 - \epsilon/3$
and $\mathbb P(C) > 1 - \epsilon/3.$ 
It follows (regardless of dependence) that 
the intersection of these events occurs 
with probability $\mathbb P(A \cap B \cap C) > 1 - \epsilon,$ as claimed.
Crucially, $d$ and $k$ do not depend on $g$ or $h.$

\subsubsection*{Final step}
For all $g, h$ in $G,$ the Gromov product $(w_kh, g)_1 < d$ 
whenever the three events $A,$ $B,$ $C$ all occur 
(Geometric step), 
which is more than $1 - \epsilon$ of the time 
(Probabilistic step).
Therefore $w_kh \not\in S(g, d)$ 
more than $1-\epsilon$ of the time, 
and so $\mathbb P(w_kh \in S(g, d)) < \epsilon,$ 
as desired.
\end{proof}

\subsection{Horofunctions}\label{horo:sec}
In this section, we use the result of the previous one 
(Criterium \ref{back:def})
to prove a result concerning horofunctions
(Criterium \ref{horo:def}).
We do not use hyperbolicity 
in these remaining Sections~\ref{horo:sec}--\ref{iter:sec}.

\begin{criterion}[Uniformly positive horofunctions]\label{horo:def}
The $(G, \mu)$-random walk $w_n$ on $(X, d_X, x_0)$ 
is said to have uniformly positive horofunctions by time $n_0$ in $X$
if for all $n \ge n_0$ and all positive $t \le$ some $t_0(n),$
\[	\sup\nolimits_{g \in G} 
	\big(\mathbb E \big(
		e^{-t\rho_g(w_n)}
		\big)\big) 
	< 1
	.
	\]
In other words, $\mathbb E (e^{-t\rho_g(w_n)}) \le 1 - \epsilon$
where $\epsilon > 0$ depends on $n$ and $t$ but not $g.$
\end{criterion}

\begin{lemma}[cf.\ \cite{mathieu-sisto}*{9.1}]\label{horo:lem}
Suppose the $(G, \mu)$-random walk $w_n$ 
on $(X, d_X, x_0)$
has exponential tail 
and is not positive-recurrent (Definitions \ref{tail} and \ref{drift}).
Then uniform shadow decay implies uniformly positive horofunctions
(Criteria \ref{back:def} and~\ref{horo:def}).
\end{lemma}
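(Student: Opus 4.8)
The plan is to prove the lemma by estimating the moment generating function $\mathbb E(e^{-t\rho_g(w_n)})$ directly, splitting the expectation according to whether $w_n$ lands in the shadow $S(g,d)$ or not, and choosing $d$, $n$, $t$ so that both contributions are controlled. The two inputs are: (i) uniform shadow decay, which says that for suitable $d$ and $n=k$ large (depending on $d$), $\mathbb P(w_n \in S(g,d)) < \epsilon$ uniformly in $g$ (here one absorbs the extra translate $h$ by noting that $\rho_g(w_n)$ only sees the displacement, and $w_kh$ versus $w_k$ can be handled by taking $h=1_G$, or by the independence remark following Criterion \ref{back:def}); and (ii) exponential tail, which controls how negative $\rho_g(w_n)$ can be, since $\rho_g(w_n) \ge -|w_n|$ always, and $|w_n|$ has uniformly exponential tails.

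The key identity to exploit is that, outside the shadow, the horofunction is bounded below by a positive quantity: by the triangle-inequality form \eqref{triangle} and the definitions,
\[
	\rho_g(w) = d_X(gx_0, wx_0) - |g| = |w| - 2(w,g)_1,
\]
so $w \notin S(g,d)$ means $(w,g)_1 < d$, hence $\rho_g(w) > |w| - 2d$. Combined with not-positive-recurrence (equivalently, the hypothesis that bounded sets are asymptotically unlikely — here used to push $|w_n|$ above, say, $4d$ with high probability for $n$ large), this forces $\rho_g(w_n) > 2d$ on a high-probability event. So I would fix $\epsilon$ small, choose $d$ large via shadow decay, choose $n$ large enough that simultaneously $\mathbb P(w_n \in S(g,d)) < \epsilon$ and $\mathbb P(|w_n| \le 4d) < \epsilon$ uniformly in $g$, and then write
\[
	\mathbb E\big(e^{-t\rho_g(w_n)}\big)
	= \mathbb E\big(e^{-t\rho_g(w_n)} \mathbbm 1_{\mathrm{good}}\big)
	+ \mathbb E\big(e^{-t\rho_g(w_n)} \mathbbm 1_{\mathrm{bad}}\big),
\]
where "good" is the event $\{\rho_g(w_n) > 2d\}$ (implied by $w_n \notin S(g,d)$ and $|w_n| > 4d$) and "bad" its complement, which has probability $< 2\epsilon$.

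On the good event $e^{-t\rho_g(w_n)} < e^{-2td} \le 1$, so that term is at most, say, $e^{-2td}$ — or more crudely just $1$; I only need it strictly below $1$, so $e^{-2td}$ suffices. On the bad event I bound $e^{-t\rho_g(w_n)} \le e^{t|w_n|}$ and apply Cauchy–Schwarz: $\mathbb E(e^{t|w_n|}\mathbbm 1_{\mathrm{bad}}) \le \mathbb E(e^{2t|w_n|})^{1/2}\,\mathbb P(\mathrm{bad})^{1/2} \le \mathbb E(e^{2t|w_n|})^{1/2}(2\epsilon)^{1/2}$. Fixing $n$ and then taking $t$ small enough that $2t \le \lambda$ (the exponential-tail constant for $\mu^n$), the factor $\mathbb E(e^{2t|w_n|})^{1/2}$ is finite; shrinking $t$ further if necessary so that $e^{-2td}$ stays bounded, say, below $\tfrac12$, and having chosen $\epsilon$ at the outset small enough that $\mathbb E(e^{\lambda|w_n|})^{1/2}(2\epsilon)^{1/2} < \tfrac14$, I get $\mathbb E(e^{-t\rho_g(w_n)}) < \tfrac34 < 1$ uniformly in $g$. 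That establishes uniformly positive horofunctions by time $n_0 = n$, with $t_0(n) = \min\{\lambda/2,\ldots\}$.

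The main obstacle is the bookkeeping of the order of quantifiers, which mirrors the structure of Criterion \ref{back:def}: $d$ must be chosen before $n$ (uniform shadow decay only kicks in at times $k \ge k'(d)$), and $t$ must be chosen after $n$ (the tail bound $\mathbb E(e^{2t|w_n|})$ is finite only for $2t \le \lambda$, but its \emph{value} grows with $n$, so $t_0$ genuinely depends on $n$ — which is exactly what Criterion \ref{horo:def} allows). One subtlety worth care: the "bad" event must be chosen so that its probability is controlled \emph{uniformly in $g$}, which is why I take "bad" $\subseteq \{w_n \in S(g,d)\} \cup \{|w_n| \le 4d\}$ rather than the raw complement of $\{\rho_g(w_n) > 2d\}$; the first set has probability $<\epsilon$ uniformly by shadow decay and the second has probability $<\epsilon$ by not-positive-recurrence with no $g$-dependence at all. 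Everything else is the routine Cauchy–Schwarz splitting; no hyperbolicity is used, consistent with the remark before the lemma.
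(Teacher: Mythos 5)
There is a genuine gap, and it is exactly the one the paper's ``Step Zero'' is designed to circumvent: your constants cannot be made to close. In your final bound the bad-event term is $\mathbb E\big(e^{2t|w_n|}\big)^{1/2}(2\epsilon)^{1/2}$, and you propose to defeat it by ``having chosen $\epsilon$ at the outset small enough that $\mathbb E(e^{\lambda|w_n|})^{1/2}(2\epsilon)^{1/2}<\tfrac14$.'' But $\epsilon$ must be chosen \emph{before} $d\ge d'(\epsilon)$, which must be chosen before $n\ge k'(d)$ (and before $n$ is pushed further so that $\mathbb P(|w_n|\le 4d)<\epsilon$), while $\mathbb E(e^{\lambda|w_n|})$ grows without bound in $n$; so the quantity you need $\epsilon$ to beat is only determined after $\epsilon$ is fixed, and $k'(d)$ comes with no quantitative control. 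Shrinking $t$ does not rescue this: making $t$ smaller pushes $e^{-2td}$ \emph{up} toward $1$ (your ``shrinking $t$ so that $e^{-2td}$ stays below $\tfrac12$'' is backwards), so to open a definite gap below $1$ you need $t\gtrsim 1/d$, and at such $t$ the factor $\mathbb E(e^{2t|w_n|})^{1/2}$ is enormous for the large $n$ you are forced to take. In short, an unconditional Cauchy--Schwarz at time $n$, with $|w_n|$ itself serving as the estimate, cannot give a bound uniform in $g$ that is strictly below $1$; this is precisely the correlation/uniformity obstruction discussed before Step One. A secondary mismatch: you invoke $\mathbb P(|w_n|\le 4d)<\epsilon$, i.e.\ asymptotic probability zero, calling it ``equivalent'' to the hypothesis; the lemma only assumes not-positive-recurrence ($\mathbb E(|w_n|)\to\infty$), which is strictly weaker.

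The paper's proof avoids all of this by splitting $w_n=w_k\cdot(w_k^{-1}w_n)$ with $k$ \emph{fixed}. Setting $K=|w_k|$, $N=|w_k^{-1}w_n|$ and $A=\mathbbm 1\{w_n\notin S(g,d)\}$, uniform shadow decay gives the \emph{conditional} bound $\mathbb E(1-A\mid N)<0.01$ --- this is where the translate $h$ in Criterion \ref{back:def} is indispensable, since one conditions on $w_k^{-1}w_n=h$; it cannot be discarded by ``taking $h=1_G$.'' Cauchy--Schwarz is then applied conditionally on $N$, so the exponential factor that appears is $\mathbb E(e^{2tK})$, finite and depending only on the fixed $k$, not on $n$. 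The resulting estimate $f(K,N,t)$ is $g$-free, equals $1$ at $t=0$, and has $\frac{d}{dt}\mathbb E(f(K,N,t))|_{t=0}=\mathbb E(K-0.8N+2d)$, which is eventually negative because $\mathbb E(N)\to\infty$ by not-positive-recurrence; positivity of the gap for small $t>0$ then follows, uniformly in $g$. If you want to salvage your outline, you must introduce an analogous fixed-time/renewal decomposition (or some other device making the Cauchy--Schwarz factor independent of $n$); as written, the argument does not go through.
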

	
We do not require the action of $G$ to be acylindrical,
nor do we require $X$ to be a priori hyperbolic.
However, uniform shadow decay
is a consequence of hyperbolicity by Lemma~\ref{back:lem}.

The following proof is adapted from 
Mathieu and Sisto~\cite{mathieu-sisto}*{Theorem 9.1}.

\begin{proof}[Proof. Step Zero.]
It will help guide the proof to think of 
the goal as being to show 
\[	\text{``Goal":}
	\quad
	\mathbb E(\rho_g(w_n)) 
	> 0
	.
	\]
Indeed, if the horofounction $\rho_g(w_n)$ 
for a given $g$ in $G$ and $n$ in $\mathbb N$
has positive expectation, then
\[	{\textstyle \frac d{dt}} \,
	\mathbb E\big(e^{-t\rho_g(w_n)}\big)
	|_{t = 0}
	= \mathbb E(-\rho_g(w_n))
	< 0
	,
	\]
since 
\(	\frac d{dt} \mathbb E(e^{tY})
	|_{t = 0}
	= \mathbb E(Y)
	\)
in general.\footnote{
	For each $g$ in $G$ and integer $n,$
	the real-valued random variable $\rho_g(w_n)$ 
	has a lower bound 
	(depending on $g$ and $n$).
	It follows that $e^{-t\rho_g(w_n)}$
	has finite expectation 
	for all positive~$t,$ inte\-ger~$n,$ and~$g$ in~$G.$
	In other words, 
	we get a one-sided moment generating function for free.
	We also get a one-sided derivative 
	for each moment generating function $\mathbb E(e^{-t\rho_g(w_n)}).$
	
	However, since we want to bound 
	the whole family of moment generating functions,
	it does not suffice to first differentiate
	and then bound the derivative $\mathbb E(\rho_g(w_n)).$
	Instead, we must reverse the order:
	in Step Three (Estimation) below, 
	we first construct an estimate $f(K, N, t)$
	whose ex\-pect\-ation bounds
	$\mathbb E(e^{-t\rho_g(w_n)})$ from above.
	Since we assume $\mu$ has exponential tail,
	this $f(K,N,t)$ will be finite and differentiable with respect to~$t$ 
	in an open neighborhood about zero.
	
	We work with $e^{-t\rho_g(w_n)}$ 
	instead of $e^{t\rho_g(w_n)}$
	so that our estimates hold for $t \ge 0.$
	}
It would then follow that
$\mathbb E(e^{-t\rho_g(w_n)}) < 1$
for sufficiently small $t > 0.$

For arbitrary $g$ and $w_n$ in $G,$
\begin{equation}
	\rho_g(w_n)
	\ge -|w_n|
	\text{ always,}
	\label{horo:bdd2int}
	\end{equation}
since
\(	|w_n| + \rho_g(w_n)
	= d_X(x_0, w_nx_0)
	+ d_X(w_nx_0, gx_0)
	- d_X(gx_0, x_0)
	,
	\)
which is non-negative by the triangle inequality.
Then the strategy for obtaining $\mathbb E(\rho_g(w_n))$ positive
is to name an event that
(1)~is highly likely, and 
(2)~implies $\rho_g(w_n)$ is highly positive.
In the event $w_n$ lies outside some shadow $S(g, d),$
from the definition of horofunction and shadow,
\(	|w_n| - \rho_g(w_n)
	= 2(w_n, g)_1
	< 2d
	.
	\)
So up to some constant~$d,$
\begin{equation}
	\rho_g(w_n)
	\approx |w_n|
	\text{ when }
	w_n \not\in S(g, d)
	.
	\label{horo:bdd3int}
	\end{equation}
The shadow $S(g, d)$ is unlikely to contain $w_n$ for large $d$
under the assumption of shadow decay; however, 
we will need the stronger property 
of uniform shadow decay 
(obtained from Lemma \ref{back:lem})
for the following reason.

Suppose we have a random {\em estimate} 
and {\em event}
so that $\rho_g(w_n) \ge -$({\em estim\-ate}) in general,
and $\rho(w_n) \ge {}$({\em estimate}) in the case that the event occurs.
Then
\begin{align*}
		\rho_g(w_n)
		&= \rho_g(w_n)(1 - \mathbbm1_{\text{\em event}})
		+ \rho_g(w_n)(\mathbbm1_{\text{\em event}})
	\\	&\ge -(\text{\em estimate})(1 - \mathbbm1_{\text{\em event}})
		+ (\text{\em estimate})(\mathbbm1_{\text{\em event}})
	\intertext{where $\mathbbm1_{\text{\em event}}$
		denotes the characteristic function, 
		i.e., the random variable
		which takes the value 1 for outcomes (sample paths) 
		in the {\em event}
		and the value 0 otherwise.
		Then
		}
		\mathbb E(\rho_g(w_n))
		&\ge \mathbb E\big((\text{\em estimate}) 
		(2\mathbbm1_{\text{\em event}} - 1)\big)
	\\	&= \mathbb E((\text{\em estimate}) \,
		\mathbb E\big(2\mathbbm1_{\text{\em event}} - 1 
			\mid \text{\em estimate})\big)
	,
	\end{align*}
since 
\(	\mathbb E(YZ) 
	= \mathbb E(\mathbb E(YZ \mid Y)) 
	= \mathbb E(Y \mathbb E(Z \mid Y))
	\)
for arbitrary random variables $Y$ and $Z.$
Thus we want to show not only that 
$\mathbb P$({\em event}) is close to 1,
but that $\mathbb P$({\em event $\mid$ estimate} = $h$) 
is close to 1 for all~$h.$
The former does not suffice
because the {\em event} 
and {\em estimate} may be correlated---we do not know 
a priori that the {\em event} 
is not somehow less likely 
when the {\em estimate} is large.
The latter will give us just enough independence to establish the lemma.

We will not use $|w_n|$ as our {\em estimate},
since $\mathbb P(w_n \not\in S(g, d) \mid w_n = h)$
equals either 0 or 1 depending on whether or not $h$ is in $S(g, d).$
In other words, 
we cannot bound $\mathbb P(w_n \not\in S(g, d) \mid w_n = h)$
near 1 for all $h \in G$
because the estimate $|w_n|$ 
contains too much information about the event $w_n \not\in S(g, d).$
Hence we pass to an estimate with less information
with the following adjustment.

By the assumption that the $(G, \mu)$-random walk is not positive-recurrent,
the distance $|w_n|$ has much greater expectation than $|w_k|$
for $n \gg k,$ and so
\begin{equation}
	|w_n| \approx |\kn|.
	\label{horo:bdd4int}
	\end{equation}
Putting~\eqref{horo:bdd2int}--\eqref{horo:bdd4int}
together,
the strategy becomes to find $d, k$ so that for all~$n, g,$
\[	\begin{aligned}
			\rho_g(&w_n) \gtrapprox -|\kn|
			\text{ always, and}
		\\	\rho_g(&w_n) \approx \phantom-|\kn| 
			\text{ when } w_n \not\in S(g, d)
			.
		\end{aligned}
	\]
More precisely, 
let $K = |w_k|,$ let $N = |\kn|,$ and let 
$A =\mathbbm1\{w_n \not\in S(g, d)\}.$
Step One (Metric Geometry)
shows in~\eqref{horo:bdd} that
\begin{align*}
	&\text{claim:} 
	\qquad
	\left\{\begin{aligned}
			\rho_g(&w_n) \ge -K - N \text{ always, and}
		\\	\rho_g(&w_n) \ge -K + N - 2d 
			\text{ when } w_n \not\in S(g, d)
			.
		\end{aligned}\right.
	\intertext{Then Step Two (Probability)
		uses uniform shadow decay
		to find a sufficiently large integer~$k$ and real~$d$
		so that~\eqref{horo:100}
		for all $g$ in $G$ and~$n \ge k,$
		}
	&\text{claim:} 
	\qquad\quad
	\mathbb E(1 - A \mid N) < 0.01.
	\intertext{Lastly, having fixed such $d$ and $k,$
		Step Three (Estimation) 
		constructs in~\eqref{horo:f} a random variable $f(K, N, t)$
		that is finite in a neighborhood of $t = 0$
		and finds a constant $n_0 \ge k$ so that
		}
	&\text{claim:} 
	\qquad
	\left\{\begin{aligned}
			&\mathbb E\big(e^{-t\rho_g(w_n)}\big) 
			\le \mathbb E(f(K,N,t)) 
			\text{ for all } t\ge0 \text{ and } g \text{ and } n\ge k,
		\\	&f(K, N, 0) \equiv 1
			\text{ for all } n\ge k \text{, and}
		\\	&\textstyle\frac d{dt} \mathbb E(f(K, N, t))|_{t = 0} < 0
			\text{ for all } n\ge n_0
			.
		\end{aligned}\right.
	\end{align*}
Crucially, $f(K, N, t)$ will not depend on $g.$
As depicted in Figure \ref{horo:mgf},
it will then follow that 
each $n \ge n_0$ has some $t_0(n)$ such that for all positive $t \le t_0,$
the family of moment generating functions is bounded
\(	\sup_g \mathbb E(e^{-t\rho_g(w_n)})
	\le \mathbb E(f(K, N, t)) 
	< 1
	,
	\)
as desired.
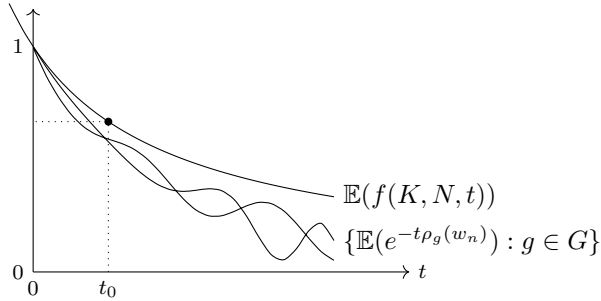
\begin{figure}
	\begin{tikzpicture}
		\draw[->] 
			(0, 0) node[below] {\footnotesize 0} -- 
			(5, 0) node[right] {\footnotesize $t$}
			;
		\draw[->] 
			(0, 0) node[left] {\footnotesize 0} -- (0, 3.5)
			;
		\draw[smooth, domain = 0.28:1]
			plot ({6*\x - 2)}, 1/\x) node[right] {$\mathbb E(f(K,N,t))$};
			;
		\draw[smooth, domain = 0.334:1]
			plot ({6*\x - 2)}, {1/\x - 0.2*(\x*3 - 1)*(2 + sin((\x*3 - 1)^2r*3))}) 
			node[right] {$\{\mathbb E(e^{-t\rho_g(w_n)}) : g \in G\}$}
			plot ({6*\x - 2)}, {1/\x - 0.2*(\x*3 - 1)^0.5*(2 + sin((\x*3 - 1)r*7))})
			;
		\fill
			(0, 3) node[left] {\footnotesize 1}
			(1, 0) node[below] {\footnotesize $t_0$}
			(1, 2) circle (1.5pt)
			;
		\draw[dotted]
			(1, 0) -- (1, 2) -- (0, 2);
		\end{tikzpicture}
	\caption{We bound for all $g$ in $G,$ positive $t,$ and $n \ge k,$
		\(\mathbb E(e^{-t\rho_g(w_n)}) \le \mathbb E(f(K,N,t))\)
		in Step Three (Estimation).
		Moreover, $f(K,N,t)$ is defined without using $g,$
		is finite in a neighborhood of $t = 0,$
		evaluates to one at $t = 0,$ 
		and has derivative $\frac d{dt} \mathbb E(f(K,N,t))|_{t=0}$
		that is negative
		for all $n \ge$ some constant $n_0.$
		It then follows that 
		\(	\textstyle \sup_g
			(\mathbb E(e^{-t\rho_g(w_n)})) 
			\le \mathbb E(f(K,N,t)) < 1\)
		for all $n \ge n_0$
		and all positive $t \le$ some $t_0(n).$
		}
	\label{horo:mgf}
	\end{figure}

%

\subsubsection*{Step One (Metric geometry)}
From the definitions of horofunction and shadow, 
\begin{equation}
	\label{horo:bdd1}
	|w_n| - \rho_g(w_n) 
	= 2(w_n,g)_1
	< 2d
	\quad \text{when } w_n \not\in S(g, d)
	.
	\end{equation}
By the triangle inequality:
\begin{align}
	\label{horo:bdd2}
	|w_n| + \rho_g(w_n) 
	= d_X(x_0, w_nx_0) + d_X(w_nx_0, gx_0) - d_X(gx_0, x_0)
	&\ge 0
	;
\\	\label{horo:bdd3}
	K + |w_n| - N
	= d_X(w_kx_0, x_0) + d_X(x_0, w_nx_0) - d_X(w_nx_0, w_kx_0)
	&\ge 0
	;
\\	\label{horo:bdd4}
	K + N - |w_n|
	= d_X(x_0, w_kx_0) + d_X(w_kx_0, w_nx_0) - d_X(w_nx_0, x_0)
	&\ge 0
	.
	\end{align}
Combining equations~\eqref{horo:bdd1}--\eqref{horo:bdd4}
we get 
\begin{equation}\begin{aligned}
		&\rho_g(w_n) 
		\overset{{^{\scriptstyle\eqref{horo:bdd2}}}}
		\ge -|w_n| \phantom{{} - 2r} 
		\overset{{^{\scriptstyle\eqref{horo:bdd4}}}}
		\ge -K - N\phantom{{} - 2r} 
		\quad
		\text{always, and}
	\\	&\rho_g(w_n)
		\underset{{_{\scriptstyle\eqref{horo:bdd1}}}}
		> \phantom{-{}} |w_n| - 2d
		\underset{{_{\scriptstyle\eqref{horo:bdd3}}}}
		\ge -K + N - 2d
		\quad
		\text{when $w_n \not\in S(g, d).$}
	\end{aligned}\label{horo:bdd}\end{equation}
We will refer to these two bounds $-K - N$ and $-K + N - 2d$ 
as the ``weak" and ``strong" bounds of equation~\eqref{horo:bdd},
respectively.

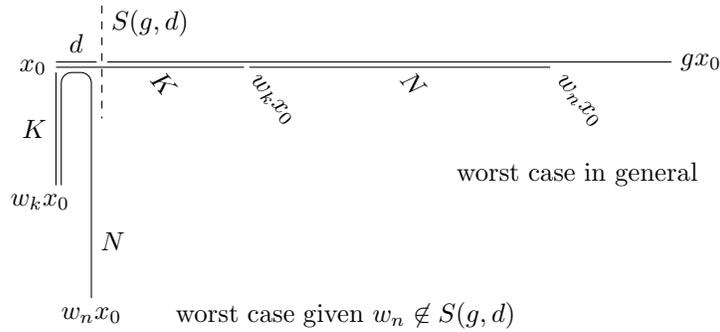
\begin{figure}
	\begin{tikzpicture}[rounded corners = 4]
		\draw 
			(0, 0) node[right] {$gx_0$}
			-- 
			++(left: 7.5) 
			++(left: 2pt) node (shadow) {}
			++(left: 2pt)
			-- node[above] {$d$} 
			++(left: 4mm + 4pt) node (x0) {}
			++(down: 4pt)
			-- node[left] {$K$}
			++(down:1.5) 
			node[below] {$w_kx_0\phantom{w_k}$}
			++(right: 2pt) 
			-- 
			++(up:1.5) 
			-- 
			++(right: 4mm) 
			-- node[right, near end] {$N$} 
			++(down: 3) 
			node[below] (wn1) {$w_nx_0$}
			node[right of = wn1, anchor = west] 
			{worst case given $w_n \not\in S(g, d)$}
			(x0) ++(down: 2pt)  node[left] {$x_0$} 
			-- node[right, rotate = 310] {$K$}
			++(right: 2.5cm) 
			node (wk2) {} 
			node[right, rotate = 310] {$\,w_kx_0$}
			++(right: 2pt)
			-- node[right, rotate = 310] {$N$}
			++(right: 4cm) 
			node {} node[right, rotate = 310] (wn2) {$\,w_nx_0$}
			node[below of = wn2] {worst case in general}
			;
		\node[left of = wn2] {};
		\draw[dashed] 
			(shadow)
			++(up:.75) 
			-- node[right, pos = 0.15] {$S(g, d)$} 
			++(down:1.5)
			;
		\end{tikzpicture}
	\caption{As an aside, these trees are
		configurations that minimize $\rho_g(w_n)$
		in general and in the case $w_n \not\in S(g, d),$ respectively,
		whence an alternate derivation of \eqref{horo:bdd}.
		For example, if the ``worst case in general" geodesic tree depicted 
		connects $x_0,$ $w_kx_0,$ $w_nx_0,$ $gx_0,$
		then $\rho_g(w_n) = -K - N$ by Figure \ref{horo:slim}.
		If no such geodesic tree exists, 
		then $\rho_g(w_n) > -K - N.$
		In particular, we do not assume such geodesic trees exist,
		and we do not assume $X$ is $\delta$-hyperbolic.
		}
	\label{horo:kn}
	\end{figure}

\subsubsection*{Step Two (Probability)}
Fix $k$ and $d$ large enough that 
\[	\mathbb P(w_kh \in S(g, d))
	< 0.01
	\quad \text{ for all }
	g, h \text{ in } G
	\]
using uniform shadow decay, Criterion~\ref{back:def}.
This explicit 0.01 bound for a single pair $k, d$
will be good enough for our estimate in~\eqref{horo:f}.
Since $w_k$ and $\kn$ are independent,
thus $\mathbb P(w_k\kn \in S(g, d) \mid \kn = h) < 0.01$
for all $g, h$ in $G$ and $n \ge k.$
We have defined $A = \mathbbm1\{w_n \not\in S(g, d)\},$
and it follows that
\begin{equation}
	\mathbb E(1 - A \mid \kn)
	< 0.01
	\quad \text{ for all }
	g \text{ in } G 
	\text{ and }
	n \ge k
	.
	\label{horo:100kn}
	\end{equation}
This bound means that
no matter what value $h$
the random variable $\kn$ takes, 
the conditional probability 
$\mathbb P(w_n \in S(g, d) \mid \kn = h) < 0.01.$
Note that the condi\-tional expectation $\mathbb E(1 - A \mid \kn)$
is a real-valued random variable 
taking values in the interval from 0 to 1 
depending on the (random) value of $\kn.$
Equation~\eqref{horo:100kn} bounds 
the entire range of outcomes for this random variable.
We cannot make a similar bound on $\mathbb E(1 - A \mid w_n)$ 
because $w_n$ contains too much information, 
and so conditional expectation 
$\mathbb E(1 - A \mid w_n) = 1 - A = 0$ or 1 
depending on whether or not $w_n \in S(g, d).$

Lastly, since $N$ contains strictly less information than $\kn$
in the sense that $N$ completely depends on $\kn,$ thus
\begin{equation}
	\mathbb E(1 - A \mid N) 
	= \mathbb E(\mathbb E(1 - A \mid \kn) \mid N) 
	< 0.01
	\label{horo:100}
	\end{equation}
for all $g$ in $G$ and $n \ge k.$

\subsubsection*{Step Three (Estimation)}
To obtain the $g$-independent bound in Criterion \ref{horo:def},
we must work directly with the moment generating function
first before passing to the first moment.

Recall that $K = |w_k|,$ $N = |\kn|,$ and $A = \mathbbm1\{w_n \not\in S(g, d)\}.$
We get to apply $-K + N - 2d < \rho_g(w_n)$
the ``strong" bound in equation~\eqref{horo:bdd}
for the sample paths where $A = 1.$
We fall back on the ``weak" bound 
$-K - N \le \rho_g(w_n)$
for the sample paths where $A = 0.$
Hence for all real $t \ge 0,$ integer $n \ge k,$ and $g$ in $G,$
\begin{align*}
		e^{-t\rho_g(w_n)}
		&\overset{\mathclap{\eqref{horo:bdd}}}
		\le e^{-t(-K + N - 2d)} A
		+ e^{-t(-K - N)} (1 - A)
	\\	&= e^{t(K - N + 2d)}
		+ \Big(e^{t(K + N)} - e^{t(K - N + 2d)}\Big) 
		(1 - A)
	\\	&= e^{t(K - N + 2d)}
		+ \Big(e^{tN} - e^{t(-N + 2d)}\Big) e^{tK} 
		(1 - A)
		.
	\end{align*}

This upper bound on $e^{-t\rho_g(w_n)}$ means that
the horofunction $\rho_g(w_n)$ is no less than $-K + N - 2d,$ 
except when $w_n$ lies in $S(g, d),$ 
in which case we must weaken
the bound from $-K + N - 2d$ to $-K - N.$ 

Taking expectation of both sides, it follows that 
for all real $t \ge 0,$ integer $n \ge k,$ and $g$ in $G,$
\begin{equation}\begin{aligned}
	\mathbb E\big(e^{-t\rho_g(w_n)}\big)
	\le \mathbb E\Big(
		e^{t(K - N + 2d)}
		+ \Big(e^{tN} - e^{t(-N + 2d)}\Big)e^{tK}(1 - A)
		\Big)&
	\\ = \mathbb E\Big(
		e^{t(K - N + 2d)}
		+ \Big(e^{tN} - e^{t(-N + 2d)}\Big)
		\,\mathbb E\Big(e^{tK}(1 - A) \mid N\Big)
		\Big)&
	\end{aligned}
	\label{horo:exp}
	\end{equation}
since
\(	\mathbb E(\phi(Y)Z) 
	= \mathbb E(\mathbb E(\phi(Y)Z \mid Y))
	= \mathbb E(\phi(Y)\mathbb E(Z \mid Y))
	\)
for any arbitrary function $\phi$ and random variables $Y, Z,$ 
regardless of dependence.
By exponential tail (Definition \ref{tail}), 
each term on the right is finite for $t$ 
in an open neighborhood of zero depending on 
(i.e., not necessarily uniform in)~$n.$
	
We want to apply our Step Two bound~\eqref{horo:100} to 
$\mathbb E(e^{tK}(1 - A) \mid \kn).$
By Cauchy-Schwarz,
the fact that characteristic functions 
equal their squares,
and~\eqref{horo:100}, 
it follows that for all real $t \ge 0,$ integer $n \ge k,$ and $g$ in $G,$
\begin{align}
		\mathbb E\big(e^{tK}(1 - A) \mid N\big) 
		&\overset{\mathclap{\text{C-S}}}\le
		\sqrt{\mathbb E\big(e^{2tK} \mid N\big) 
			\mathbb E\big((1 - A)^2 \mid N\big)}
		\notag
	\\	&\overset{\mathclap{\eqref{horo:100}}}\le
		\sqrt{\mathbb E\big(e^{2tK} \mid N\big)} \, 0.1
		\notag
	\intertext{which, 
		since $K$ and $N$ are independent,
		}
		&= \sqrt{\mathbb E\big(e^{2tK}\big)} \, 0.1
		\notag
	\intertext{which, 
		since $t$ and $K$ are non-negative,
		}
		&\le \mathbb E\big(e^{2tK}\big) 0.1
		.
		\label{horo:10}
	\end{align}
We are looking to bound~\eqref{horo:exp} 
from above using
\(	\mathbb E\big(e^{tK}(1 - A) \mid N\big) 
	\le \mathbb E\big(e^{2tK}\big) \, 0.1
	,
	\)
equation~\eqref{horo:10}.
Doing so requires the relevant term in~\eqref{horo:exp} 
to be non-negative.
Hence we make a technical adjustment,
replacing $e^{tN} - e^{t(-N + 2d)}$ 
with the strictly larger $e^{t(N + 2d)} - e^{t(-N + 2d)}.$
Only the latter is non-negative (for $t \ge 0$) on all sample paths.
Without this adjustment,
the sign and inequality flip for those sample paths
where $N$ is small enough relative to $d$
to make the ``weak" bound $-K - N$ 
stronger than the ``strong" bound $-K + N - 2d.$

Putting it all together, we find that 
for all real $t \ge 0,$ integer $n \ge k,$ and $g$ in $G,$ 
\begin{equation}
	\mathbb E \Big(e^{-t\rho_g(w_n)}\Big)
	\le \mathbb E \Big(
		\underbrace{e^{t(K - N + 2d)}
			+ \Big(e^{t(N + 2d)} - e^{t(-N + 2d)}\Big) e^{2tK} 0.1
			}_{=: f(K,N,t)}
		\Big)
	,
	\label{horo:f}
	\end{equation}
where we can combine in a single expectation
all terms and factors 
by linearity and independence of $K, N$ repsecitvely.
We define the right-hand side of equation~\eqref{horo:f} 
to be our estimate $\mathbb E(f(K,N,t)).$
Crucially, the random variable
$f(K,N,t)$ depends only on $n$ and $t,$ 
and does not depend on $g.$

By construction,
$\mathbb E\big(e^{-t\rho_g(w_n)}\big) \le \mathbb E(f(K,N,t))$
for all $g,$ positive $t,$ and $n\ge k.$
Also by construction,
$f(K, N, 0) \equiv 1$ for all $n \ge k.$
As argued in Step Zero and depicted in Figure \ref{horo:mgf},
all that remains is to find $n_0$ such that
for all $n \ge n_0,$
\[	\text{claim:}
	\quad
	\textstyle \frac d{dt} 
	\mathbb E(f(K, N, t))|_{t = 0} 
	< 0
	.
	\]
By direct computation from~\eqref{horo:f}, the derivative
\begin{align*}
		\textstyle \frac d{dt} 
		\mathbb E(f(K, N, t))|_{t = 0} 
		&= \mathbb E(K - N + 2d + 0.2N)
	\\	&= \mathbb E(K - 0.8N + 2d)
		.
	\end{align*}
Note that for any random variable $X,$
$\frac{\text d}{\text dt} (\mathbb E(e^{tX}))|_{t = 0} = \mathbb E(X).$

Recall that $d, k$ are fixed constants.
Since $\lim_{n \to \infty} \mathbb E(|w_n|)$ is infinite 
by assumption that the $(G, \mu)$-random walk 
is not positive-recurrent~\eqref{drift:eq3},
thus so is $\lim_{n \to \infty} \mathbb E(N).$
Therefore, 
$\frac d{dt} \mathbb E(f(K, N, t))|_{t = 0}$
is negative for all $n \ge$ some sufficiently positive $n_0.$
The existence of such $n_0$ establishes the lemma.
\end{proof}

\subsection{Progress}\label{prog:sec}
In this section, we use the result of the previous one 
to prove linear progress with exponential decay 
for the $a$-iterated random walk.

We proceed in two steps.
First we show that the $a$-iterated random walk
satisfies a progress condition.
Then we show that every (not necessarily Markov) process 
that satisfies this condition
makes linear progress with exponential decay.


\begin{criterion}[Uniformly positive progress]\label{prog:def}
Let $Z_n$ be a (not necessarily Mar\-kov) sequence 
of real-valued random-variables.
We say that $Z_n$  
makes uniformly positive progress in $\mathbb R$
to mean that there exist $b, \epsilon > 0$ 
so that $\mathbb E(e^{-bZ_0}) $ is finite
and for each integer~$n \ge 0,$
\begin{equation}
	\mathbb E\big(e^{-b(Z_{n+1} - Z_n)} \mid Z_n\big) 
	\le 1 - \epsilon
	\quad \text{almost surely}
	,
	\label{prog:eq}
	\end{equation}
where $\epsilon$ is not allowed to depend on $n.$
We say that the $(G, \mu)$-random walk $w_n$
makes uniformly positive progress in $X$
if~\eqref{prog:eq} holds for $Z_n = |w_n|.$
\end{criterion}

The left-hand side of~\eqref{prog:eq}
is an $\mathbb R_{\ge 0}$-valued random variable
taking values depending on the (random) value of $Z_n.$
Specifically, the left-hand side is defined to be
the random variable taking the real value
$\mathbb E(e^{-b(Z_{n+1} - Z_n)} \mid Z_n = \ell)$
for the outcomes where $Z_n = \ell.$
Note that since $-b(Z_{n + 1} - Z_n)$ is not strictly positive,
equation~\eqref{prog:eq} is not an exponential tail statement.

\begin{lemma}[cf.\ \cite{mathieu-sisto}*{9.1}]\label{prog:cor}	
Uniformly positive horofunctions by time $a$ for $w_n$
(Criterion \ref{horo:def})
implies uniformly positive progress for $Z_n = |w_{an}|$
(Criterion \ref{prog:def}),
for the same constant $a.$
\end{lemma}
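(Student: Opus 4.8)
The plan is to express each one-step increment of $Z_n = |w_{an}|$ as a horofunction evaluated at a fresh, independent copy of $w_a$, and then to feed this into the hypothesis of uniformly positive horofunctions read off at the single time $n = a$. Two preliminary observations set things up. First, since $w_0 = 1_G$ we have $Z_0 = 0$, so $\mathbb E(e^{-bZ_0}) = 1$ is finite for every $b$ and the integrability clause of Criterion~\ref{prog:def} needs nothing further. Second, for each $n \ge 0$ I would write $v_n := w_{an}^{-1}w_{an+a} = s_{an+1}\cdots s_{an+a}$; being the product of the $a$ steps that come strictly after time $an$, the random variable $v_n$ is independent of $w_{an}$ and has the same law $\mu^a$ as $w_a$.

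The crux is the identity
\[
	Z_{n+1} - Z_n
	= |w_{an+a}| - |w_{an}|
	= \rho_{w_{an}^{-1}}(v_n),
\]
which follows by applying the isometry $w_{an}$: one has $|w_{an}^{-1}| = d_X(x_0, w_{an}^{-1}x_0) = d_X(w_{an}x_0, x_0) = |w_{an}|$ and $d_X(w_{an}^{-1}x_0, v_nx_0) = d_X(x_0, w_{an}v_nx_0) = |w_{an+a}|$, so the defining formula $\rho_g(w) = -|g| + d_X(gx_0, wx_0)$ gives exactly the displayed expression. (Compare Figure~\ref{prog:fig}: $\rho_{w_{an}^{-1}}(v_n)$ is precisely the progress the walk makes away from its current position during the next block of $a$ steps.)

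With the identity in hand I would condition on the whole of $w_{an}$, not merely on $Z_n$. Using that $v_n$ is independent of $w_{an}$ and distributed as $w_a$, for every $h \in G$,
\begin{align*}
	\mathbb E\big(e^{-b(Z_{n+1} - Z_n)} \mid w_{an} = h\big)
	&= \mathbb E\big(e^{-b\rho_{h^{-1}}(v_n)}\big)
	= \mathbb E\big(e^{-b\rho_{h^{-1}}(w_a)}\big) \\
	&\le \sup_{g \in G} \mathbb E\big(e^{-b\rho_g(w_a)}\big).
\end{align*}
Uniformly positive horofunctions by time $a$, applied at $n = a$ in Criterion~\ref{horo:def}, produces a positive constant $b$ (any positive $t \le t_0(a)$ works) and a constant $\epsilon > 0$ with $\sup_{g \in G} \mathbb E(e^{-b\rho_g(w_a)}) \le 1 - \epsilon$; note $\epsilon$ depends only on $a$ and $b$, never on a running index. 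Therefore $\mathbb E(e^{-b(Z_{n+1} - Z_n)} \mid w_{an}) \le 1 - \epsilon$ almost surely, and since $\sigma(Z_n) \subseteq \sigma(w_{an})$ the tower property gives $\mathbb E(e^{-b(Z_{n+1} - Z_n)} \mid Z_n) \le 1 - \epsilon$ almost surely, which is exactly~\eqref{prog:eq}.

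The one thing to get right is the conditioning bookkeeping: one must condition on $w_{an}$ in order to exploit the independence of the increment $v_n$, and only then collapse down to the coarser $\sigma$-algebra of $Z_n$; dually, it matters that the horofunction hypothesis is invoked only at the fixed time $a$, so that the constants $b, \epsilon$ are genuinely uniform in $n$. Everything else is routine: the isometry computation behind the horofunction identity, and finiteness of the moment generating functions involved (immediate since $\rho_g(w) \ge -|g|$ makes $e^{-b\rho_g(w_a)}$ bounded by $e^{b|g|}$ for each fixed $g$, while the relevant supremum over $g$ is assumed to be strictly below $1$).
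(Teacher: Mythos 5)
Your proof is correct and follows essentially the same route as the paper: write the increment $Z_{n+1}-Z_n$ as $\rho_{w_{an}^{-1}}(w_{an}^{-1}w_{an+a})$, use independence and equidistribution of the block of $a$ fresh steps to invoke the horofunction bound at time $a$ uniformly in the conditioning value, then collapse from $\sigma(w_{an})$ to $\sigma(Z_n)$ by the tower property. The only (welcome) additions are your explicit check of the $\mathbb E(e^{-bZ_0})$ clause and of the isometry computation behind the identity.
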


As suggested by Figures \ref{prog:fig} and \ref{prog:g},
the horofunction measures progress.
This lemma shows that uniformly positive horofunctions
implies uniformly positive prog\-ress (for the $a$-iterated random walk).

Having established this lemma,
linear progress with exponential decay in $X$
for the $a$-iterated random walk will immediately follow,
i.e., there exists $C > 0$ so that for all $n,$
\(	\mathbb P(|w_{an}| \le n/C) 
	\le Ce^{-n/C}
	,
	\)
by Proposition \ref{prog:lem} below.

\begin{proof}
Suppose the $(G, \mu)$-random walk $w_n$ on $(X, d_X, x_0)$
has uniformly positive horofunctions by time $a.$
Then we can fix $a, b, c > 0$ so that for all $g$ in~$G,$
\[	\mathbb E\big(e^{-b\rho_g(w_a)}\big)
	\le e^{-c}
	;
	\]
see Figure~\ref{prog:g}.
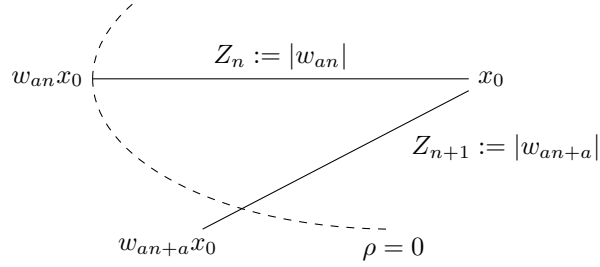
\begin{figure}
	\begin{tikzpicture}
		\draw[dashed, yscale = 0.5]
			(0, 0) arc (180: 270: 4) node[below] {$\rho = 0$}
			(0, 0) arc (180: 150: 4)
			;
		\draw
			(5, 0) node[right] (g) {$x_0$}
			(0, 0) node[left] (x) {$w_{an}x_0$}
			(1, -2) node[below] (w) {$w_{an + a}x_0$}
			(w) -- node[below right, near end] {$Z_{n + 1} := |w_{an + a}|$} (g)
			(x) -- node[above] {$Z_n := |w_{an}|$} (g)
			;
		\end{tikzpicture}
	\caption{Uniformly positive horofunctions means that 
		there exist $a, b, c > 0$ such that for all $g$ in $G,$ 
		the expectation $\mathbb E(e^{-b\rho_g(w_a)}) \le e^{-c}.$
		No matter where you are at time $an,$ 
		you expect to escape further from the basepoint $a$ steps later.
		}
	\label{prog:g}
	\end{figure}
Since $w_a$ 
and $w_{an}^{-1} w_{an + a}$ 
are identically distributed,
\[	\mathbb E\big(e^{-b\rho_g(
		w_{an}^{-1} 
		w_{an + a}
		)}\big)
	\le e^{-c}
	\]
for all integers $n \ge 0$ and $g$ in $G.$
Since $w_{an}^{-1}$ 
and $w_{an}^{-1} w_{an + a}$ 
are independent, 
\[	\mathbb E\big(
		e^{-b\rho_{w_{an}^{-1}}
			(w_{an}^{-1} 
			w_{an + a})} 
		\mid
		w_{an}^{-1}
		\big)
	\le e^{-c}
	\]
for all $n \ge 0.$
Using the definition of horofunction~\eqref{horo:eq}, 
we can then rewrite 
\(	\rho_{w_{an}^{-1}}
	(w_{an}^{-1} w_{an + a})
	\)
as $|w_{an + a}| - |w_{an}|$;
see Figure~\ref{prog:fig}.
Letting $Z_{n} := |w_{an}|,$ 
we find
\begin{equation}
	\mathbb E\big(
		e^{-b(Z_{n + 1} 
			- Z_{n})}
		\mid
		w_{an}^{-1}
		\big)
	\le e^{-c}
	.
	\label{prog:wai}
	\end{equation} 
Since $Z_{n}$ 
completely depends on $w_{an}^{-1},$
we can take conditional expectation on both sides 
and simplify to
\[	\mathbb E\big(
		e^{-b(Z_{n + 1} 
			- Z_{n})}
		\mid
		Z_{n}
		\big)
	= \mathbb E\big( 
		\mathbb E\big(
			e^{-b(Z_{n + 1} 
			- Z_{n})}
			\mid
			w_{an}^{-1}
			\big) 
		\mid
		Z_{n}
		\big)
	\overset{\eqref{prog:wai}}
	\le e^{-c}
	.
	\]
Thus the $Z_{n} := |w_{an}|$ makes 
uniformly positive progress in $\mathbb R$
(Criterion \ref{prog:def}).
\end{proof}

\begin{proposition}\label{prog:lem}
Uniformly positive progress in $\mathbb R$ (Criterion \ref{prog:def})
implies linear prog\-ress with exponential decay in $\mathbb R$ 
(Equation~\ref{intro:drift}).
\end{proposition}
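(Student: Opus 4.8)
The plan is to follow exactly the strategy used for P\'olya's walk in the introduction: track the one-sided moment generating function $\mathbb{E}(e^{-bZ_n})$, show it decays geometrically in $n$, and then read off the tail bound~\eqref{intro:drift} via Markov's inequality. Let $b,\epsilon>0$ be the constants furnished by Criterion~\ref{prog:def} (so $0<\epsilon<1$, since the conditional expectation in~\eqref{prog:eq} is strictly positive), and put $D:=\mathbb{E}(e^{-bZ_0})<\infty$.

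The first step is the observation that lets us dispense with any Markov hypothesis. Even though $Z_n$ need not be Markov, the random variable $e^{-bZ_n}$ is a deterministic function of $Z_n$, hence $\sigma(Z_n)$-measurable; so, by the tower property and by pulling $e^{-bZ_n}$ out of the conditional expectation given $Z_n$,
\begin{align*}
	\mathbb{E}\big(e^{-bZ_{n+1}}\big)
	&= \mathbb{E}\Big(\mathbb{E}\big(e^{-bZ_n}e^{-b(Z_{n+1}-Z_n)}\mid Z_n\big)\Big)\\
	&= \mathbb{E}\Big(e^{-bZ_n}\,\mathbb{E}\big(e^{-b(Z_{n+1}-Z_n)}\mid Z_n\big)\Big)\\
	&\le (1-\epsilon)\,\mathbb{E}\big(e^{-bZ_n}\big),
\end{align*}
the last line being exactly~\eqref{prog:eq}. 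Since all quantities are nonnegative these identities are valid in $[0,\infty]$, and an induction starting from $\mathbb{E}(e^{-bZ_0})=D<\infty$ keeps every term finite; iterating yields $\mathbb{E}(e^{-bZ_n})\le D(1-\epsilon)^n$ for all $n$.

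The second step is Markov's inequality applied to the nonnegative variable $e^{-bZ_n}$: for every real $t$,
\[
	\mathbb{P}(Z_n\le t)
	= \mathbb{P}\big(e^{-bZ_n}\ge e^{-bt}\big)
	\le e^{bt}\,\mathbb{E}\big(e^{-bZ_n}\big)
	\le D\,e^{bt}(1-\epsilon)^n.
\]
Choosing $t=n/C$ turns the right-hand side into $D\exp\!\big(n(b/C+\log(1-\epsilon))\big)$. As $\log(1-\epsilon)<0$, any $C$ with $b/C+\log(1-\epsilon)\le-1/C$ suffices; concretely $C:=\max\{1,\,D,\,(b+1)/|\log(1-\epsilon)|\}$ gives $b/C+\log(1-\epsilon)\le -1/C$ and $D\le C$, whence $\mathbb{P}(Z_n\le n/C)\le D\,e^{-n/C}\le C\,e^{-n/C}$ for all $n$, which is precisely~\eqref{intro:drift}.

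There is no genuine obstacle here: the whole argument is a two-line supermartingale-type estimate followed by bookkeeping in the constant $C$. The only point that deserves attention is the first step, where one must resist conditioning on a full filtration---unavailable because $Z_n$ is not assumed Markov---and instead notice that conditioning on $Z_n$ alone is already enough, precisely because the factor $e^{-bZ_n}$ being pulled out is itself a function of $Z_n$.
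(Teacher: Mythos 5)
Your proof is correct and follows essentially the same route as the paper: pull the $\sigma(Z_n)$-measurable factor $e^{-bZ_n}$ out of the conditional expectation, iterate to get geometric decay of $\mathbb{E}(e^{-bZ_n})$, and finish with Markov's inequality applied to $e^{-bZ_n}$. The only differences are cosmetic (writing $1-\epsilon$ in place of $e^{-c}$ and doing the explicit bookkeeping to fold the constants into a single $C$, which the paper instead absorbs via the remark in Definition~\ref{drift}).
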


This proposition is a special case of stochastic dominance;
see for example Lindvall \cite{lindvall02}*{Chapter III Theorem 5.8}.
However, we provide a short direct proof of Proposition \ref{prog:lem} below,
for the convenience of the reader.

Recall that although $w_n$ is a Markov process in $G,$
the process $w_nx_0$ in $X$ 
and the process $d_X(x_0, w_nx_0)$ in $\mathbb R$
do not necessarily have the Markov property.
Accordingly, we do not assume the Markov property in this proposition.

\begin{proof}[Proof. Conditional expectation step.]
Suppose $Z_n$ is a (not necessarily Markov) sequence of random-variables
that makes uniformly positive progress in $\mathbb R.$
Then we can find $b, c > 0$ so that
\begin{align}
	\notag &\mathbb E\big(
			e^{-b(Z_{n+1} - Z_n)} 
			\mid
			Z_n
			\big) 
		\le e^{-c}
	\intertext{for each integer $n \ge 0.$
		Since $e^{bZ_n}$ is completely determined by $Z_n,$
		we can pull it out of the expectation,
		so that for all $n,$
		}
	\notag &\mathbb E\big(
			e^{-bZ_{n + 1}} 
			\mid
			Z_n
			\big)
		\le e^{-bZ_n - c}
		.
	\intertext{Taking expectation on both sides, 
		we obtain that for all $n,$
		}
	\label{prog:geo}
		&\mathbb E\big(e^{-bZ_{n + 1}}\big)
		\le \mathbb E\big(e^{-bZ_n}\big)
		e^{-c}
		.
	\end{align}

\subsubsection*{Inductive step}
We claim that $\mathbb E(e^{-bZ_n}) \le Le^{-cn}$ for all $n,$
where $L = \mathbb E(e^{-bZ_0}),$ 
which is finite by assumption (Criterion \ref{prog:def}).
By definition, the claim holds for the base case $n = 0.$
Now assume for induction that 
the claim holds for some given integer $n \ge 0.$
From~\eqref{prog:geo} we have that
\(	\mathbb E(e^{-bZ_{n + 1}})
	\le \mathbb E(e^{-bZ_n}) e^{-c}
	,
	\)
which by our inductive hypothesis 
is $\le Le^{-c(n + 1)}.$
The claim follows by induction.
Rewriting, we have that for each integer $n \ge 0,$
\begin{equation}
	\mathbb E\big(e^{-bZ_n + cn}\big) 
	\le L
	.
	\label{prog:ind}
	\end{equation}

\subsubsection*{Markov inequality step}
Recall that for every
$\mathbb R_{\ge0}$-valued random variable $Y$ 
and real number $\ell,$ 
Markov's inequality gives 
$\mathbb P(Y \ge \ell) \le \mathbb E(Y) \ell^{-1}.$
Hence for all~$n,$
\[	\mathbb P\big(
		Z_n
		\le \textstyle\frac{cn}{2b}
		\big)
	= \mathbb P\big(
		e^{-bZ_n + cn} 
		\ge e^{cn/2}
		\big)
	\overset{\begin{smallmatrix}
		\text{Markov Inequality} \\ \text{and }\eqref{prog:ind}
		\end{smallmatrix}}
	\le Le^{-cn/2}
	,
	\]
and we have shown that the real-valued stochastic process $Z_n$
makes linear progress with exponential decay in $\mathbb R$
(Equation~\ref{intro:drift}).
\end{proof}

\subsection{Remainders}\label{iter:sec}
In this section we prove that
linear progress with exponential decay for the full random walk
follows from exponential tail for $\mu$
and linear progress with exponential decay for the $a$-iterated random walk.
The result is a special case of the following probabilistic fact.

\begin{proposition}\label{iter:lem}
Suppose $Y_n$ has uniformly exponential tail in $\mathbb R$ 
(Definition \ref{tail}),
and suppose $Z_n$ makes linear progress with exponential decay
in $\mathbb R$ (Equation \ref{intro:drift}).
Then the sum $Z_n + Y_n$ 
and the difference $Z_n - Y_n$
both make linear progress with exponential decay as well.
\end{proposition}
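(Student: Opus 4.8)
The plan is to reduce everything to a single union bound. Since $-Y_n$ has uniformly exponential tails whenever $Y_n$ does, the difference $Z_n - Y_n = Z_n + (-Y_n)$ is an instance of the sum, so it suffices to treat $Z_n + Y_n$. Fix $C > 0$ with $\mathbb P(Z_n \le n/C) \le Ce^{-n/C}$ for all $n$ (linear progress with exponential decay for $Z_n$), and fix $\lambda > 0$ together with a constant $M$ so that $\mathbb E(e^{\lambda|Y_n|}) \le M$ for all $n$ (uniform exponential tails for $Y_n$). I would then record the elementary inclusion of events
\[	\{Z_n + Y_n \le m\}
	\subseteq \{Z_n \le 2m\} \cup \{Y_n \le -m\}
	,
	\]
valid for every real $m$, since $Z_n > 2m$ and $Y_n > -m$ together force $Z_n + Y_n > m$.

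Apply this with $m = n/C'$ for a constant $C'$, to be pinned down, that satisfies $C' \ge 2C$. For the first event, $C' \ge 2C$ gives $\{Z_n \le 2n/C'\} \subseteq \{Z_n \le n/C\}$, hence $\mathbb P(Z_n \le 2n/C') \le Ce^{-n/C}$ by the hypothesis on $Z_n$. For the second event, Markov's inequality applied to the nonnegative variable $e^{\lambda|Y_n|}$ gives
\[	\mathbb P(Y_n \le -n/C')
	\le \mathbb P\big(|Y_n| \ge n/C'\big)
	= \mathbb P\big(e^{\lambda|Y_n|} \ge e^{\lambda n/C'}\big)
	\le \mathbb E\big(e^{\lambda|Y_n|}\big)\, e^{-\lambda n/C'}
	\le M e^{-\lambda n/C'}
	.
	\]
Adding the two bounds, $\mathbb P(Z_n + Y_n \le n/C') \le Ce^{-n/C} + Me^{-\lambda n/C'}$ for every $n$.

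It remains to absorb the right side into a single term $C''e^{-n/C''}$. Taking $C' = 2C$ and then $C'' \ge \max\{2C,\ 2C/\lambda,\ C + M\}$ yields $n/C'' \le n/C'$, $e^{-n/C''} \ge e^{-n/C}$, and $e^{-n/C''} \ge e^{-\lambda n/C'}$, so that
\[	\mathbb P(Z_n + Y_n \le n/C'')
	\le \mathbb P(Z_n + Y_n \le n/C')
	\le Ce^{-n/C} + Me^{-\lambda n/C'}
	\le (C + M)\, e^{-n/C''}
	\le C''\, e^{-n/C''}
	\]
for all $n$; this is linear progress with exponential decay for $Z_n + Y_n$, and applying the same to $Z_n + (-Y_n)$ handles the difference. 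The argument is mostly bookkeeping once the union bound is set up; the one step that genuinely matters is that the exponential-moment bound $M$ for $Y_n$ is uniform in $n$, because the hypothesis on $Z_n$ gives no control of its lower tail beyond the displayed estimate, so an $n$-dependent loss coming from $Y_n$ could not be compensated. I expect that to be the only delicate point.
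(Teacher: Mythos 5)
Your proof is correct and takes essentially the same route as the paper's: the paper likewise uses the observation that $Z_n > n/C$ and $|Y_n| < n/(2C)$ force the combination to exceed $n/(2C)$, bounds $\mathbb P(|Y_n| \ge n/(2C))$ by Markov's inequality applied to $e^{\lambda|Y_n|}$, and adds the two probabilities, the only cosmetic difference being that the paper treats sum and difference at once by bounding both below by $-|Y_n| + Z_n$ while you reduce the difference to the sum via $-Y_n$, and that you carry out explicitly the absorption of the two exponential terms into a single constant $C''$. Your closing caveat is apt: you use a uniform bound $\mathbb E(e^{\lambda|Y_n|}) \le M$, which Definition \ref{tail} does not literally provide (it only asserts finiteness for each $n$), but the paper's own proof tacitly needs the same uniformity, and in the intended application (Corollary \ref{iter:cor}) it holds automatically because $Y_n$ ranges over only finitely many distributions.
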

Note that we do not make any assumptions of independence.

\begin{proof}
It suffices to show that $-|Y_n| + Z_n$ 
makes linear progress with exponential decay,
since the sum $Y_n + Z_n$ and difference $-Y_n + Z_n$
are both $\ge -|Y_n| + Z_n.$
Fix sufficiently large $C > 0$ 
and sufficiently small $\lambda > 0$ so that
\begin{align*}
	\mathbb P\big(Z_n \le n/C\big) 
	&\le Ce^{-n/C} 
	\text{ and}
	\\ \mathbb E\big(e^{\lambda|Y_n|}\big) 
	&< \infty
	\end{align*}
for all $n.$
By Markov's inequality,
\begin{align*}
		\mathbb P\big(|Y_n| \ge \textstyle\frac n{2C}\big)
		& = \mathbb P\big(e^{\lambda|Y_n|} \ge e^{\lambda n/(2C)}\big)
	\\	& \le \mathbb E\big(e^{\lambda|Y_n|}\big) 
		\, e^{-\lambda n/(2C)}
		.
	\intertext{If one real number is $< \frac n{2C}$
		and another is $> \frac nC,$ 
		then their difference is $> \frac n{2C}.$ 
		By the contrapositive,
		}
		\mathbb P\big(-|Y_n| + Z_n \le \textstyle\frac n{2C}\big)
		&\le \mathbb P\le\big(
			|Y_n| \ge \textstyle\frac n{2C}
			\text{ or } 
			Z_n \le \frac nC 
			\big)
	\\	&\le 
		\mathbb E\big(e^{\lambda|Y_n|}\big) 
		\, e^{-\lambda n/(2C)}
		+
		Ce^{-n/C} 
		.
	\end{align*}
(Note that for arbitrary events, 
$\mathbb P(A \cup B) \le \mathbb P(A) + \mathbb P(B),$
regardless of dependence.)
It follows that $-|Y_n| + Z_n$ 
makes linear progress with exponential decay.
\end{proof}

\begin{corollary}[cf.\ \cite{mathieu-sisto}*{9.1}]\label{iter:cor}
Suppose the random walk $w_n$ on $X$
has exponential tail in $X$ (Definition \ref{tail}).
If the $a$-iterated random walk $w_{ai}$ (Definition \ref{iter})
makes linear progress with exponential decay in $X$ (Definition \ref{drift}),
then so does the full random walk $w_n.$
\end{corollary}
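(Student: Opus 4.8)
The plan is to bound $|w_n|$ below by the displacement $|w_{ai}|$ of the nearest earlier iterate minus a small error term for the at-most-$a{-}1$ intervening steps, and then invoke Proposition \ref{iter:lem}. First I would fix $n$ and set $i = \lfloor n/a \rfloor$, so that $ai \le n < ai + a$. Factoring $w_n = w_{ai}\,(w_{ai}^{-1} w_n)$ and applying the triangle inequality gives $|w_n| \ge |w_{ai}| - |w_{ai}^{-1} w_n|$. Setting $Z_n := |w_{ai}|$ and $Y_n := |w_{ai}^{-1} w_n|$, we have $|w_n| \ge Z_n - Y_n$, so it suffices to show $Z_n - Y_n$ makes linear progress with exponential decay; since the event $\{|w_n| \le n/C\}$ is then contained in $\{Z_n - Y_n \le n/C\}$, the corollary follows. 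This reduces everything to verifying the two hypotheses of Proposition \ref{iter:lem}.

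For the tail hypothesis on $Y_n$: since the steps are i.i.d.\ with law $\mu$, the factor $w_{ai}^{-1} w_n = s_{ai+1}\cdots s_n$ has law $\mu^{n-ai}$ with $0 \le n - ai < a$, so $Y_n$ is distributed as $|w_j|$ for some $j \in \{0, \dots, a-1\}$. Because $\mu$ has exponential tail, every convolution power $\mu^j$ has exponential tail with the \emph{same} $\lambda$ (this is exactly the computation given right after Definition \ref{tail}), and since there are only finitely many values of $j$ this yields a single $\lambda > 0$ for which $\mathbb E(e^{\lambda Y_n}) < \infty$ for all $n$ — that is, uniformly exponential tail in the sense of Definition \ref{tail}. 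For the progress hypothesis on $Z_n$: the hypothesis on the $a$-iterated walk gives $C_0 > 0$ with $\mathbb P(|w_{ai'}| \le i'/C_0) \le C_0 e^{-i'/C_0}$ for all integers $i'$; using $i = \lfloor n/a \rfloor \ge n/a - 1$ one rewrites this as a bound of the form $\mathbb P(Z_n \le n/C_1) \le C_1 e^{-n/C_1}$ after enlarging the constant to absorb the factor $a$, the floor shift, and the resulting multiplicative factor $e^{1/C_0}$. Then Proposition \ref{iter:lem} applies directly.

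I do not anticipate a serious obstacle. The only point requiring care is the reindexing $i \leftrightarrow n$ in the progress estimate for $Z_n$: the factor $a$ and the floor alter the decay rate and the linear threshold, but only up to multiplicative and additive constants, and linear progress with exponential decay is robust to exactly such changes (cf.\ the remark in Definition \ref{drift} that a finite sum of bounds of the form $c_{2,j} e^{-n/c_{1,j}}$ is again of that form). It is also worth emphasizing in the write-up that $Z_n$ and $Y_n$ are highly dependent — $Y_n$ is literally the tail of the word whose prefix determines $Z_n$ — but Proposition \ref{iter:lem} was stated precisely so that no independence is needed.
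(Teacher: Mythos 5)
Your proposal is correct and follows essentially the same route as the paper's proof: decompose $w_n = w_{a i(n)}\,(w_{a i(n)}^{-1}w_n)$ with $i(n)=\lfloor n/a\rfloor$, bound $|w_n|\ge Z_n - Y_n$ by the triangle inequality, check the uniformly exponential tail of $Y_n$ via the convolution-power computation after Definition \ref{tail}, reindex the $a$-iterated progress estimate from $i$ to $n$ (absorbing the factor $a$ and the floor into the constant), and conclude with Proposition \ref{iter:lem}. No gaps.
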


\begin{proof}
Fix the $(G, \mu)$-random walk $w_n$ on $(X, d_X, x_0)$
and $a > 0$ from the hypotheses.
For clarity, we will index the $a$-iterated walk by $i,$
$(w_{ai})_{i \in \mathbb N}.$
For each integer $n > 0,$ write
\[n := ai(n) + r(n)\]
where $i(n)$ and $r(n)$ are non-negative integers depending on $n$
such that $r(n) \le a - 1.$
Using linear progress with exponential decay (indexed by $i$),
fix $C > 0$ so that
\begin{align*}
		\mathbb P(|w_{ai}| \le i/C)
		&\le Ce^{-i/C}
	\intertext{for all~$i,$
		where $|w_{ai}|$ denotes $d_X(x_0, w_{ai}x_0).$
		For all $n \ge a,$ 
		we have that $n \le 2ai(n),$
		and thus $\frac n{2a} \le i(n),$
		and thus 
		}
		\mathbb P\big(
			|w_{ai(n)}| 
			\le \textstyle \frac n{2aC}
			\big)
		&\le C e^{-n/(2aC)}
		.
	\end{align*}
This equation holds for all but finitely many $n,$
and so $Z_n := |w_{ai(n)}|$ makes
linear progress with exponential decay indexed $n.$

Since $\mu$ has exponential tail in $X,$
thus $|w_n|$ has uniformly exponential tail in $X$;
see Definition \ref{tail}.
It follows that
$Y_n := |\smash{w^{-1}_{ai(n)}w_n}|,$
which is distributed identically to $|w_{r(n)}|,$
has uniformly exponential tails as well.

By Proposition~\ref{iter:lem},
the difference $-Y_n + Z_n$
makes linear progress with exponential decay in $\mathbb R.$
Since $|w_n| \ge -Y_n + Z_n$ 
by the triangle inequality, 
hence $|w_n|$ makes 
linear progress with exponential decay in $\mathbb R.$
By definition, 
the $(G, \mu)$-random walk makes
linear progress with exponential decay in $X.$
\end{proof}

\renewcommand{\sectionmark}[1]{\markright{#1}{}}
\begin{bibdiv}
\begin{biblist}
\bib{bridson-haefliger}{book}{
	author = {Bridson, Martin Robert},
	author = {Haefliger, Andr\'e},
	title = {Metric spaces of non-positive curvature},
	series = {Grundlehren der Mathematischen Wissenschaften},
	volume = {319},
	publisher = {Springer-Verlag, Berlin},
	date = {1999},
	pages = {xxii+643},
	isbn = {3-540-64324-9},
	review = {\MR {1744486}},
	}
\bib{gromov87}{article}{
	author = {Gromov, Mikhail Leonidovich},
	title = {Hyperbolic groups},
	conference = { title = {Essays in group theory}, },
	book = { series = {Math. Sci. Res. Inst. Publ.}, volume = {8}, publisher = {Springer, New York}, },
	date = {1987},
	pages = {75--263},
	review = {\MR {919829}},
	}
\bib{handel-mosher16}{article}{
	author = {Handel, Michael},
	author = {Mosher, Lee},
	title = {The free splitting complex of a free group II: Loxodromic outer automorphisms},
	date = {2016-06-28},
	status = {preprint},
	note = {\arxiv {1402.1886v2}},
	}
\bib{kaimanovich00}{article}{
	author = {Kaimanovich, Vadim Adol'fovich},
	title = {The Poisson formula for groups with hyperbolic properties},
	journal = {Ann. of Math. (2)},
	volume = {152},
	date = {2000},
	number = {3},
	pages = {659--692},
	issn = {0003-486X},
	review = {\MR {1815698}},
	}
\bib{kapovich-benakli}{article}{
	author = {Kapovich, Ilya E},
	author = {Benakli, Nadia},
	title = {Boundaries of hyperbolic groups},
	conference = { title = {Combinatorial and geometric group theory}, address = {New York, 2000/Hoboken, NJ}, date = {2001}, },
	book = { series = {Contemp. Math.}, volume = {296}, publisher = {Amer. Math. Soc., Providence, RI}, },
	date = {2002},
	pages = {39--93},
	review = {\MR {1921706}},
	note = {\arxiv {math/0202286}},
	}
\bib{lawler10}{book}{
	author = {Lawler, Gregory Francis},
	title = {Random walk and the heat equation},
	series = {Student Mathematical Library},
	volume = {55},
	publisher = {American Mathematical Society, Providence, RI},
	date = {2010},
	pages = {x+156},
	isbn = {978-0-8218-4829-6},
	review = {\MR {2732325}},
	}
\bib{lindvall02}{book}{
	author = {Lindvall, Torgny},
	title = {Lectures on the coupling method},
	note = {Corrected reprint of the 1992 original},
	publisher = {Dover Publications, Inc., Mineola, NY},
	date = {2002},
	pages = {xiv+257},
	isbn = {0-486-42145-7},
	review = {\MR {1924231}},
	}
\bib{maher12}{article}{
	author = {Maher, Joseph},
	title = {Exponential decay in the mapping class group},
	journal = {J. Lond. Math. Soc. (2)},
	volume = {86},
	date = {2012},
	number = {2},
	pages = {366--386},
	issn = {0024-6107},
	review = {\MR {2980916}},
	note = {\arxiv {1104.5543}},
	}
\bib{lubotzky-maher-wu}{article}{
	author = {Lubotzky, Alexander},
	author = {Maher, Joseph},
	author = {Wu, Conan},
	title = {Random methods in 3-manifold theory},
	language = {Russian, with English summary},
	journal = {Tr. Mat. Inst. Steklova},
	volume = {292},
	date = {2016},
	number = {Algebra, Geometriya i Teoriya Chisel},
	pages = {124--148},
	issn = {0371-9685},
	isbn = {5-7846-0137-7},
	isbn = {978-5-7846-0137-7},
	review = {\MR {3628457}},
	note = {\arxiv {1405.6410}},
	}
\bib{maher-tiozzo}{article}{
	author = {Maher, Joseph},
	author = {Tiozzo, Giulio},
	title = {Random Walks on Weakly Hyperbolic Groups},
	journal = {Journal f\"ur die reine und angewandte Mathematik},
	status = {ahead of print},
	date = {2016},
	note = {\arxiv {1410.4173}},
	}
\bib{mathieu-sisto}{article}{
	author = {Mathieu, Pierre},
	author = {Sisto, Alessandro},
	title = {Deviation inequalities and CLT for random walks on acylindrically hyperbolic groups},
	date = {2015-12-10},
	status = {preprint},
	note = {\arxiv {1411.7865v2}},
	}
\bib{osin16}{article}{
	author = {Osin, Denis V.},
	title = {Acylindrically hyperbolic groups},
	journal = {Trans. Amer. Math. Soc.},
	volume = {368},
	date = {2016},
	number = {2},
	pages = {851--888},
	issn = {0002-9947},
	review = {\MR {3430352}},
	note = {\arxiv {1304.1246}},
	}
\bib{sawyer95}{article}{
	author = {Sawyer, Stanley Arthur},
	title = {Martin boundaries and random walks},
	conference = { title = {Harmonic functions on trees and buildings}, address = {New York}, date = {1995}, },
	book = { series = {Contemp. Math.}, volume = {206}, publisher = {Amer. Math. Soc., Providence, RI}, },
	date = {1997},
	pages = {17--44},
	review = {\MR {1463727}},
	}
\bib{tiozzo15}{article}{
	author = {Tiozzo, Giulio},
	title = {Sublinear deviation between geodesics and sample paths},
	journal = {Duke Math. J.},
	volume = {164},
	date = {2015},
	number = {3},
	pages = {511--539},
	issn = {0012-7094},
	review = {\MR {3314479}},
	note = {\arxiv {1210.7352}},
	}
\bib{woess00}{book}{
	author = {Woess, Wolfgang},
	title = {Random walks on infinite graphs and groups},
	series = {Cambridge Tracts in Mathematics},
	volume = {138},
	publisher = {Cambridge University Press, Cambridge},
	date = {2000},
	pages = {xii+334},
	isbn = {0-521-55292-3},
	review = {\MR {1743100}},
	}
\end{biblist}
\end{bibdiv}
\renewcommand{\sectionmark}[1]{\markright{\thesection.\ #1}{}}

\end{document}